\documentclass[reqno,11pt, letterpaper]{amsart}
\PassOptionsToPackage{fleqn}{amsmath}
\RequirePackage{amsthm}
\RequirePackage{amsmath}
\RequirePackage{latexsym}
\RequirePackage{amssymb}
\RequirePackage{amscd}
\RequirePackage{epsfig}
\RequirePackage{graphics}
\RequirePackage{ifthen}
\RequirePackage{varioref}
\usepackage{changes}
\usepackage{soul}
\usepackage{xcolor}
\usepackage{color}
\usepackage[misc]{ifsym}
\usepackage{bm}
\usepackage{lipsum}
\usepackage{subfigure}
\usepackage{enumitem}

\usepackage{epstopdf}
\definecolor{darkgreen}{rgb}{0.0625,0.64,0.0625}
\ifpdf
  \usepackage[
    pdftex,
    colorlinks,%
    linkcolor=blue,citecolor=blue,urlcolor=blue,
    hyperindex,%
    plainpages=false,%
    bookmarksopen,%
    bookmarksnumbered%
  ]{hyperref}
\else
  \usepackage{hyperref}
\fi
\usepackage{bookmark}
\allowdisplaybreaks[1]

\def\R{{\mathbb R}}

\def\E{{\mathbb E}}

\theoremstyle{plain}
\newtheorem{thm}{Theorem}[section]
\newtheorem{lem}[thm]{Lemma}
\newtheorem{prop}[thm]{Proposition}
\newtheorem{cor}[thm]{Corollary}
\theoremstyle{definition}
\newtheorem{rem}[thm]{Remark}

\newtheorem{defn}[thm]{Definition}

\def\d{\mathrm d}

\def\Bbb#1{{\mathbb#1}}

\def\R{\Bbb R}

\def\S{\Bbb S}

\numberwithin{equation}{section}
\newcommand\blfootnote[1]{%
 \begingroup
 \renewcommand\thefootnote{}\footnote{#1}%
 \addtocounter{footnote}{-1}%
 \endgroup
 }

\begin{document}

\title[Evolution of hypersurfaces in $(n+1)$-dimensional light-cone]{Evolution of hypersurfaces in $(n+1)$-dimensional light-cone}

	\author[X. Jiang]{Xinjie Jiang}% ${}^*$}%\footnote{Corresponding author: }
\address{Xinjie Jiang}
\email{serge0912@icloud.com}

\author[S. Pan]{Shengliang Pan}% ${}^*$}%\footnote{Corresponding author: }
\address{Shengliang Pan}
\email{slpan@tongji.edu.cn}

\address{
School of Mathematical Sciences, Key Laboratory of Intelligent Computing and Applications (Ministry of Education), Tongji University, No.1239, Siping Road, Shanghai, 200092, P. R. China}

    \author[Y. Yang]{Yun Yang${}^*$}\blfootnote{${}^*$~Corresponding author: yangyun@mail.neu.edu.cn}
    \address{ Yun Yang\newline\indent
     Department of Mathematics, Northeastern University, Shenyang, 110819, P.R. China}
    \email[Corresponding author]{yangyun@mail.neu.edu.cn}

\begin{abstract}
In this paper, we investigate the evolutionary processes of hypersurfaces within half of the $(n+1)$-dimensional light-cone. Depending on the evolutionary processes, our focus extends to exploring variational problems associated with a smooth function $f(S_1,\cdots,S_n)$, where each $S_r$ denotes the $r$-th elementary symmetric polynomial, defined as the sum of all possible products of $r$ distinct principal curvatures. We present several fundamental properties related to these variational problems. Furthermore, we examine a curvature-type flow defined locally within the light-cone, establishing its perpetual existence and smooth convergence to a circle whose length is preserved and equal to that of the initial curve.
\end{abstract}

\subjclass[2010]{53A35, 53A55, 53A10,  53C50}

\keywords{Minkowski light-cone;\; variational formula;\; constrained flow}
%Please type here List of Keywords for your article separated by semicolon.

%\Classification{53A15;\; 53A55} % e.g. 35A30; 81Q05
% For 2010 Mathematics Subject Classification see http://www.ams.org/mathscinet/msc/msc2010.html

%\linenumbers
\maketitle
\section{Introduction}
For centuries, philosophers and scientists have sought to reduce the laws of nature to a minimal set of fundamental principles. Among them, the principle most often regarded as universal is the principle of least action. In broad terms, it states that nature tends to minimize certain action variables or functionals. The mathematical framework that captures such ideas is the calculus of variations, whose central tool is the Euler–Lagrange equation, used to determine functions that maximize or minimize functionals.
This variational viewpoint, originating in physics, naturally extends to geometry. Just as the least action principle characterizes physical trajectories, the notion of minimizing functionals leads in geometry to the study of minimal submanifolds. These submanifolds arise as critical points of the area or volume functional \cite{bla0, bla1, cm}, and they also appear as static solutions to the mean curvature flow \cite{hui-84, zhu}. As a central theme in global differential geometry, minimal submanifolds have been extensively studied, producing many deep results (see \cite{bre12, bre131, bre21, bre} and references therein). A wide range of methods—such as the variational approach, Min–Max theory, geometric measure theory, and mean curvature flow—have been developed to further explore their properties \cite{mn14, mn16, mn17}.
Moreover, the study of the second variation provides crucial insights into stability, as discussed in \cite{cl0, cl}. Undoubtedly, the theory of minimal submanifolds has become a cornerstone of geometric analysis, serving as both a fundamental concept and a powerful tool in mathematics \cite{bre13}. Its well-established framework has also found significant applications in diverse fields, including computer vision, probability theory, and general relativity.

The problem of finding a minimal surface with a prescribed boundary has long drawn the attention of mathematicians such as Lagrange, Euler, and Plateau. Plateau, in particular, studied soap films stretched across wire frames, offering a striking physical realization of minimal surfaces. This classical question is now known as Plateau’s problem \cite{cour, dou, rad}. Well-known examples of minimal surfaces include the helicoid and the catenoid \cite{cm0}.
Closely related in spirit is the Bernstein problem \cite{alm, bgg}, which concerns entire minimal graphs and highlights the tension between local conditions and global geometry. While Plateau’s problem focuses on how prescribed boundary data determine the resulting surface, the Bernstein problem explores whether global regularity can be guaranteed in the absence of boundary constraints. Together, they illustrate two complementary aspects of minimal surface theory: the influence of boundary conditions and the constraints imposed by global structure. These themes have inspired extensive studies of Bernstein-type problems in Euclidean spaces (see \cite{fs, ssy, sol} and the references therein).

A natural extension of classical minimal surface theory is to study minimal surfaces in Riemannian manifolds, going beyond the Euclidean setting.
Let $M^n$ be an $n$-dimensional Riemannian manifold that is immersed isometrically into a space form $N^{n+1}$.
Denote the principal curvatures of the immersion by $\kappa_1,\cdots,\kappa_n$. For each $r=0,1,\cdots, n$, let $S_r$
be the $r$-th elementary symmetric polynomial of the principal curvatures, which is given by the sum of all possible products of $r$ distinct principal curvatures: $\displaystyle \sum_{i_1<i_2<\cdots<i_r}\kappa_{i_1}\kappa_{i_2}\cdots\kappa_{i_r}$.
Reilly \cite{rei} extended variational problems associated with area or volume functionals to integrals of the form $\displaystyle \int_Mf(S_1,\cdots,S_n)dV$, where $f$ is any smooth function defined on $M^n$, and $dV$ is the volume element of $M^n$.
The Willmore submanifold \cite{bry}  is an extremal submanifold determined by the Willmore
functional $\displaystyle \int_M (S-nH^2)^{n/2}dV$, where $S$  denotes the square of the length of the second fundamental
form, $H$ is the mean curvature of $M^n$. Notably, it is invariant under M\"obius (or conformal) transformations of $\mathbb{S}^{n+1}(1)$.

Unlike space forms equipped with a Riemannian metric, 
the Minkowski light-cone carries a degenerate metric. 
In polar coordinates, the $(n+2)$-dimensional Minkowski space can be expressed as  
\[
\mathbb{E}^{n+2}_1 \cong \mathbb{R} \times (0,\infty) \times \S^n,
\]  
with metric  
\[
\eta = -dt^2 + dr^2 + r^2 d\Omega^2,
\]  
where \(d\Omega^2\) denotes the standard round metric on \(\S^n\). 
The standard light-cone centered at the origin is given by  
\[
LC := \{|t| = r\},
\]  
and the induced metric on \(LC\) takes the degenerate form \(r^2 d\Omega^2\). 
Consequently, the usual Riemannian framework is not well-defined on the light-cone. 
Moreover, due to the normal–tangent duality, any vector orthogonal to the light-cone is necessarily tangent to it, and hence null. This feature makes the definition of the second fundamental form of the light-cone particularly subtle. In this paper, we investigate variational problems associated with a smooth function  
\[
f(S_1, \dots, S_n),
\]  
defined on hypersurfaces \(M^n\) contained in one half of the light-cone. 
In recent years, geometric flows on the light-cone have drawn significant attention 
(see \cite{rs, wol}). 
Here, we employ geometric flows as a central tool to study such variational problems.

%Let $S_r$ denote the $r$-th elementary symmetric function of the the eigenvalues
%    $k_1,\cdots,k_n$ of matrix $\Phi=\left(h^i_j\right)$, specifically:
%    \begin{align*}
%        S_0=1,\;S_1=k_1+\cdots+k_n,\;\cdots, \; S_n=k_1\cdots k_n.
%    \end{align*}
%    The Newton transformation $T_r$ are then defined inductively in the following manner
%    \begin{align*}
%      T_0{}^i_j=\delta^i_j,\; T_{r+1}{}^i_j=S_{r+1}\delta^i_j-T_{r}{}^{ik}h_{kj},\; r=0,1,\cdots, n-1.
%    \end{align*}
%
%\begin{rem}
%The hypersurface $M^n$  lies entirely in a hyperplane $P(\mathbf{v},1)$ if and only if it is an umbilical  hypersurface with constant curvature (for more details see Proposition \ref{prop-plane}). Specifically, at every point on $M^n$, all principal curvatures attain an identical value:
%\begin{align*}
%  k_1=\cdots=k_n=\frac{\eta(\mathbf{v},\mathbf{v})}{2}.
%\end{align*}
%The principal curvatures are positive when  $\mathbf{v}$ is space-like, zero when $\mathbf{v}$ is light-like, and negative when $\mathbf{v}$ is time-like.
%\end{rem}

Consider a smooth family of immersions $\mathbf{X}:M^n\times[0,T)\rightarrow LC^*$ representing the motion of a hypersurface in the future-pointing half of the light-cone $LC^*$.
The evolution of $\mathbf{X}$ is governed by
    \begin{equation*}\label{basic-flow}
       \dot{\mathbf{X}}(\theta,t)= U(\theta,t) \mathbf{X}(\theta,t)+W^i(\theta,t)\mathbf{e}_i(\theta,t), \quad \theta=\{\theta^1,\cdots,\theta^n\},
    \end{equation*}
    with the initial immersion $\mathbf{X}_0=\mathbf{X}(p,0)$, where $U(\theta,t)$ and $W^i(\theta,t)$ are scalar-valued functions describing the normal and tangential velocities, respectively, and $\displaystyle \dot{\mathbf{X}}=\frac{dr}{dt}\frac{\partial}{\partial r}+\frac{d\theta^i}{dt}\frac{\partial}{\partial\theta^i}$.
We focus on deformations that strongly fix the boundary $\partial M$, meaning that both $U$ and its gradient vanish on $\partial M$, and the tangential component of the deformation vector field $\dot{\mathbf{X}}$ is tangent to $\mathbf{X}(\partial M)$ along the boundary. Clearly, if $M$ is compact without boundary, there are no restrictions on the deformation. In this setting, the following first variation formula holds
\begin{thm}\label{main thm A}
       Suppose that $f$ is any smooth function of $n$ variable. Then
       \begin{align*}
         \frac{d}{dt}\int_Mf(S_1,\cdots,S_n)dV=\int_MU\left(\sum_{r=1}^n\left(D_rf\right)_{,ij}\left(T_{r-1}\right)^{ij}+nf-2rS_r\right)dV.
       \end{align*}
\end{thm}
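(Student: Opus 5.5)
The plan is a direct computation of the time derivatives of $dV$ and of the $S_r$ under the flow, followed by two integrations by parts in which the divergence-free property of the Newton tensors $T_{r-1}$ and the strong boundary conditions remove all boundary terms. I work in the structure of hypersurfaces in $LC^*$ used throughout the paper. Along $M^n\subset LC^*$ there is a unique null vector field $\mathbf{Y}$ with $\eta(\mathbf{X},\mathbf{Y})=1$ and $\eta(\mathbf{Y},\mathbf{e}_i)=0$. With respect to it one has the structure equations
\[
\mathbf{X}_{,ij}=-h_{ij}\mathbf{X}-g_{ij}\mathbf{Y},\qquad \mathbf{Y}_{,i}=h_i^{\,j}\mathbf{e}_j .
\]
These equations define the second fundamental form $h_{ij}$, and $S_r$ are the elementary symmetric functions of $(h^i_j)$. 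Two features will be used repeatedly: $h_{ij}$ is symmetric, and since the ambient space is flat it satisfies the Codazzi equation $h_{ij,k}=h_{ik,j}$. By the standard argument, Codazzi implies that each Newton tensor $T_{r-1}$ is divergence free, $(T_{r-1})^{ij}{}_{,i}=0$, and that $\dot S_r=(T_{r-1})^{j}{}_{i}\,\dot h^i_j$.

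\textbf{Step 1: evolution of the metric and volume.} Differentiate $g_{ij}=\eta(\mathbf{X}_{,i},\mathbf{X}_{,j})$ using $\dot{\mathbf{X}}=U\mathbf{X}+W^k\mathbf{e}_k$. Since $\eta(\mathbf{X},\mathbf{X})=0$ and $\eta(\mathbf{X},\mathbf{e}_i)=0$, the normal part contributes only $2Ug_{ij}$, which gives
\[
\dot g_{ij}=2Ug_{ij}+W_{i,j}+W_{j,i}.
\]
Consequently $\frac{d}{dt}dV=(nU+\operatorname{div}W)\,dV$.

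\textbf{Step 2: evolution of the curvature.} Differentiate $\mathbf{Y}_{,i}=h_i^{\,j}\mathbf{e}_j$, or equivalently $h_{ij}=-\eta(\mathbf{X}_{,ij},\mathbf{Y})$, using the structure equations. The aim is
\[
\dot h^i_j=\pm U^{,i}{}_{,j}-2U\,h^i_j+(\text{Lie derivative of }h \text{ along } W),
\]
with the sign of the Hessian term fixed by the paper's orientation convention for $\mathbf{Y}$. As a consistency check on the zeroth-order term, rescaling $\mathbf{X}\mapsto\lambda\mathbf{X}$ sends $\mathbf{Y}\mapsto\lambda^{-1}\mathbf{Y}$, hence $h^i_j\mapsto\lambda^{-2}h^i_j$, which matches the term $-2Uh^i_j$. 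Contracting with $T_{r-1}$ and using $\operatorname{tr}(T_{r-1}h)=rS_r$ then gives
\[
\dot S_r=\pm (T_{r-1})^{ij}U_{,ij}-2rU S_r+W^kS_{r,k}.
\]
I expect this step to be the main obstacle. One needs the variation of $\mathbf{Y}$ itself, which is determined by differentiating the constraints $\eta(\mathbf{Y},\mathbf{Y})=0$, $\eta(\mathbf{X},\mathbf{Y})=1$ and $\eta(\mathbf{Y},\mathbf{e}_i)=0$. One must also track carefully the degenerate normal–tangent duality, since $\mathbf{X}$ is both normal and tangent to the cone.

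\textbf{Step 3: assembling and integrating by parts.} Combining Steps 1 and 2,
\[
\frac{d}{dt}\int_M f\,dV=\int_M\Big(\sum_r D_rf\big(\pm(T_{r-1})^{ij}U_{,ij}-2rUS_r\big)+nfU\Big)dV+\int_M \operatorname{div}(fW)\,dV .
\]
The last integral vanishes by the divergence theorem, because $W$ is tangent to $\mathbf{X}(\partial M)$ on the boundary (or $M$ is closed). For the Hessian term, integrate by parts twice. Each time, the boundary term vanishes because $U=0$ and $\nabla U=0$ on $\partial M$, and the divergence-free property kills the derivatives falling on $T_{r-1}$. This yields
\[
\int_M D_rf\,(T_{r-1})^{ij}U_{,ij}\,dV=\int_M U\,(D_rf)_{,ij}(T_{r-1})^{ij}\,dV.
\]
Collecting terms gives the stated formula, in which the last term is to be read as $-2rS_rD_rf$. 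The overall sign convention of Step 2 must be chosen so that the Hessian term appears with the plus sign as stated.
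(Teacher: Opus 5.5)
Your route is the paper's: evolve $g_{ij}$ and $dV$, evolve $h^i_j$ by varying the null normal, contract with $T_{r-1}$ via Proposition~\ref{N prop}, and integrate by parts twice using $\mathrm{div}\,T_{r-1}=0$ and the strong boundary conditions. Steps~1 and~3 are correct, and reading the last term as $-\sum_r 2rS_rD_rf$ is right.

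The problem is Step~2. You state it only as an ``aim'' with an undetermined $\pm$, and then propose to \emph{choose} the sign so that it matches the theorem. That choice is not available to you. Your $\mathbf{Y}$ is uniquely fixed by $\eta(\mathbf{Y},\mathbf{Y})=0$, $\eta(\mathbf{X},\mathbf{Y})=1$, $\eta(\mathbf{Y},\mathbf{e}_i)=0$; it is the paper's $\mathbf{L}$ from Proposition~\ref{prop-unique}. So there is no orientation freedom. Once you have written $\mathbf{X}_{,ij}=-h_{ij}\mathbf{X}-g_{ij}\mathbf{Y}$, the sign of the Hessian term is determined, and it is \emph{negative}.

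To see this, differentiate the three constraints (take $W=0$ for brevity). This gives $\dot{\mathbf{L}}=-U\mathbf{L}-g^{kl}U_{,l}\,\mathbf{e}_k$, which is the paper's formula for $\frac{d}{dt}\mathbf{L}$. In the paper's convention $\bar\nabla_i\mathbf{e}_j=\nabla_i\mathbf{e}_j-g_{ij}\mathbf{L}+h_{ij}\mathbf{X}$, i.e.\ $h_{ij}=\eta(\partial_i\partial_j\mathbf{X},\mathbf{L})$, one gets
\[
\dot h_{ij}=\eta\bigl(\partial_i\partial_j(U\mathbf{X}),\mathbf{L}\bigr)+\eta\bigl(\partial_i\partial_j\mathbf{X},\dot{\mathbf{L}}\bigr)=\bigl(\partial_i\partial_jU+Uh_{ij}\bigr)-\bigl(\Gamma^k_{ij}U_{,k}+Uh_{ij}\bigr)=U_{,ij},
\]
and hence $\dot h^k_i=g^{ks}U_{,is}-2Uh^k_i$. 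Your scaling check only pins down the underived $U$-terms. It is this computation that shows the $Uh_{ij}$ terms cancel and that no first-order terms in $U$ survive.

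Your $h_{ij}$ is the negative of the paper's. So with your definitions $\dot h_{ij}=-U_{,ij}$, and your $S_r$ are $(-1)^r$ times the paper's. Your argument would therefore prove the formula with $-\sum_r(D_rf)_{,ij}(T_{r-1})^{ij}$ in place of $+\sum_r(D_rf)_{,ij}(T_{r-1})^{ij}$.

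To get the theorem as stated, adopt the paper's sign for $h$: the one with $\bar\nabla_i\mathbf{L}=-h^j_i\mathbf{e}_j$. In that convention the slices $r\equiv\mathrm{const}$ have $h^i_j=-\tfrac{1}{2r^2}\delta^i_j$, and the flow of Section~\ref{sec-flow} is forward parabolic. Then replace the unverified ``aim'' by the computation above. With those two changes your proof is exactly the paper's.
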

Here $T_r$ denotes the Newton transformation, defined inductively by
%\begin{align*}
%	T_0=I,\quad T_{r+1}=S_{r+1}I-\Phi T_r, \quad r=0,1,\cdots, n-1.
%\end{align*}
$$
T_0{}^i_j=\delta^i_j,\quad 
T_{r+1}{}^i_j=S_{r+1}\,\delta^i_j - T_{r}{}^{ik} h_{kj}, \quad r=0,1,\cdots, n-1.
$$

\begin{defn} A hypersurface in $LC^*$ is said to be $r$-extremal if $S_r$ vanishes identically.
\end{defn}
For $f = S_r$ with $n \neq 2r$, any extremum of the functional $\int_M S_r\,dV$ is attained precisely at an $r$-extremal hypersurface.
\begin{cor}\label{1 var}
       Suppose that $M^n$ is as before. Then
       \begin{align*}
          \frac{d}{dt}\int_MS_rdV=\int_M U(n-2r)S_rdV.
       \end{align*}
       If, in addition, the immersion yields an extremal value for $n\neq 2r$, i.e., $S_r=0$ when $t=t_0$, then at $t=t_0$,
       \begin{align*}
         \frac{d^2}{dt^2}\int_MS_rdV=(2r-n)\int_M\left(T_{r-1}\right)^{ij}U_iU_jdV.
       \end{align*}
\end{cor}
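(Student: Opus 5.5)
The first claim follows directly from Theorem \ref{main thm A} with $f=S_r$. Then $D_rf=1$ and $D_sf=0$ for $s\neq r$, so every term $(D_sf)_{,ij}(T_{s-1})^{ij}$ vanishes, because the partial derivatives of $f$ are constant functions on $M$. The zeroth-order part of the integrand, $nf-2\sum_s sS_sD_sf$ (this is how I read the term $nf-2rS_r$ in the theorem), reduces to $nS_r-2rS_r=(n-2r)S_r$. This gives
\[
\frac{d}{dt}\int_M S_r\,dV=\int_M U(n-2r)S_r\,dV .
\]

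For the second variation I will differentiate this identity once more in $t$:
\[
\frac{d^2}{dt^2}\int_M S_r\,dV=(n-2r)\int_M\Big(\dot U S_r\,dV+U\dot S_r\,dV+US_r\,\tfrac{d}{dt}(dV)\Big).
\]
At $t=t_0$ we have $S_r\equiv0$, so the first and third terms drop out. Only $(n-2r)\int_M U\,\dot S_r\,dV$ survives, and the whole problem reduces to computing $\dot S_r$ at $t_0$.

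The main step is to extract the evolution of $S_r$ from the proof of Theorem \ref{main thm A}. That first-variation formula must come from an identity of the form
\[
\dot S_r=(T_{r-1})^{ij}U_{,ij}+c_r\,U S_r+(\text{terms in }S_{r+1},\,S_1S_r,\dots)+W^iS_{r,i}.
\]
Integrating the contribution of the Hessian term by parts twice produces the $(D_rf)_{,ij}(T_{r-1})^{ij}$ term. Two facts are used here: the Newton tensor $T_{r-1}$ is divergence-free, which holds for hypersurfaces in $LC^*$ by the Codazzi equation there, and the boundary terms vanish because $U$ and $\nabla U$ vanish on $\partial M$.

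To pin down the lower-order terms, I will use the light-cone structure. The shape operator should evolve as $\dot h^i_j=U^{,i}{}_{,j}+(\text{multiple of }U)\,h^i_j+(\text{multiple of }U)\,\delta^i_j+\text{tangential terms}$. This form reflects the fact that on $LC^*$ the Gauss equation ties the curvature linearly to $h$, rather than quadratically as in space forms, and it is consistent with the coefficient $n-2r$ appearing in the theorem. The trace against $T_{r-1}$ then gives
\[
\dot S_r=(T_{r-1})^{ij}U_{,ij}+\alpha\, rUS_r+\beta\,(n-r+1)US_{r-1}+W^iS_{r,i}.
\]
The hard point is the $S_{r-1}$ term, which does not obviously vanish at $t_0$. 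I expect its coefficient $\beta$ to be zero, since the formula in Theorem \ref{main thm A} contains no $S_{r-1}$-type contribution. If $\beta$ turned out to be nonzero, the result would have to be repaired through the way $\dot S_r$ is paired with $U$; verifying $\beta=0$ against the derivation of the theorem is the step I regard as the main obstacle.

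Granting $\beta=0$: at $t_0$ we have $S_r\equiv0$, hence $S_{r,i}\equiv0$, so
\[
\int_M U\dot S_r\,dV=\int_M U\,(T_{r-1})^{ij}U_{,ij}\,dV .
\]
Integrating by parts, using again that $T_{r-1}$ is divergence-free and that $U=0$ on $\partial M$, this equals
\[
-\int_M (T_{r-1})^{ij}U_iU_j\,dV .
\]
Multiplying by $(n-2r)$ gives $(2r-n)\int_M(T_{r-1})^{ij}U_iU_j\,dV$, which is the claimed second variation.
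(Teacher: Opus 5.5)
Your route is the one the paper intends. You use Theorem \ref{main thm A} with $f=S_r$ for the first identity; your reading of the zeroth-order term as $nf-2\sum_s sS_sD_sf$ is the intended one. You then take one more $t$-derivative, discard everything multiplied by $S_r$ or $\nabla S_r$ at $t_0$, and integrate by parts once using $(T_{r-1})^{ij}{}_{,j}=0$. The first part is complete.

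The second part, however, rests on an evolution equation for $S_r$ that you never establish. The form of $\dot h^i_j$ is only asserted (``should evolve as''), and then $\beta=0$ is granted. This is not cosmetic. Suppose $\dot S_r$ contained, besides $(T_{r-1})^{ij}U_{,ij}+W^sS_{r,s}$, a remainder $B^iU_{,i}+cU$. Then at $t_0$
\[
\int_M U\dot S_r\,dV=-\int_M (T_{r-1})^{ij}U_iU_j\,dV+\int_M\bigl(c-\tfrac12 B^i{}_{,i}\bigr)U^2\,dV .
\]
Theorem \ref{main thm A}, tested against arbitrary $U$ supported in the interior of $M_{t_0}$, only gives $c-B^i{}_{,i}=-2rS_r$. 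It determines the adjoint of the linearized operator evaluated on the constant $1$, not the operator itself, so $\tfrac12\int_M B^i{}_{,i}U^2\,dV$ remains uncontrolled. Once $B=0$ is known, Theorem \ref{main thm A} does settle your $S_{r-1}$ question, since it then forces $c=-2rS_r$ pointwise, which vanishes at $t_0$. But the absence of first-order terms has to come from the geometry.

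The missing ingredient is the paper's evolution equation for $h^k_i$ together with Lemma \ref{lem-evo-S}, and it is a short computation from the structure equations \eqref{eqn-stru}. Take $W=0$. The $W$-terms form the Lie derivative of $h$; after tracing with $T_{r-1}$ their commutator part cancels because $T_{r-1}\Phi=\Phi T_{r-1}$, leaving $S_{r,s}W^s$. Then $\partial_t\mathbf e_i=U_{,i}\mathbf X+U\mathbf e_i$. Differentiating $\eta(\mathbf L,\mathbf X)=1$, $\eta(\mathbf L,\mathbf L)=0$ and $\eta(\mathbf L,\mathbf e_i)=0$ gives $\partial_t\mathbf L=-U\mathbf L-U^{,k}\mathbf e_k$. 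Since $h_{ij}=\eta(\partial_i\partial_j\mathbf X,\mathbf L)$ and $\partial_i\partial_j\mathbf X=\Gamma^k_{ij}\mathbf e_k-g_{ij}\mathbf L+h_{ij}\mathbf X$, you get
\[
\dot h_{ij}=\bigl(\partial_i\partial_jU+Uh_{ij}\bigr)+\bigl(-\Gamma^k_{ij}U_{,k}-Uh_{ij}\bigr)=U_{,ij},\qquad \dot g_{ij}=2Ug_{ij}.
\]
Hence $\dot h^k_i=U_{,i}{}^{k}-2Uh^k_i$, with neither a $U\delta^k_i$ term nor a first-order term. Tracing against $T_{r-1}$ with Newton's formula $\mathrm{tr}(\Phi T_{r-1})=rS_r$ gives
\[
\dot S_r=(T_{r-1})^{ij}U_{,ij}-2rUS_r+S_{r,s}W^s,
\]
that is, $\alpha=-2$, $\beta=0$ and $B=0$. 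With this in place, the rest of your argument goes through as written.
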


\begin{rem}  When $n=1$, the $1$-extremal curves are minimal. When $n>2$, the $1$-extremal hypersurfaces are maximal. When $2r=n$, the integral $\displaystyle \int_MS_rdV$
remains invariant under previously introduced admissible deformations.
\end{rem}

The study of nonlocal constrained curve flows can be traced back to the 1980s.  In order to understand the motion of an elastically stretched rubber band enclosing incompressible fluid, Gage \cite{Gag85} incorporated a nonlocal term into the velocity of the curve shortening flow to constrain the area enclosed by the evolving curve. Specifically, he proposed the flow
\begin{align*}
	\frac{\partial\mathbf{X}}{\partial t}=\left(\frac{2\pi}{\mathcal{L}(t)}-\kappa\right)\nu,
\end{align*}
where $\kappa$ is the curvature, $\nu$ is the outward-pointing unit normal vector, and $\mathcal{L}(t)$ denotes the length of the evolving curve. Gage showed that this flow preserves the enclosed area and smoothly deforms the initial curve into a round circle.
The idea of introducing appropriate nonlocal correction terms to constrain the evolution of geometric quantities has since been widely generalized and developed. See \cite{TW15,GPT20.1,GPT20.2,GPT21} and references therein. 
%Moreover, nonlocal curvature flows with geometric constraints have been explored in other geometric settings, such as hyperbolic geometry \cite{WY22} and affine geometry \cite{YZ25}. 
In higher dimensions, the natural analogue of Gage's area-preserving flow is the volume-preserving mean curvature flow, introduced by Huisken \cite{Hui87}. He proved that this flow exists globally and evolves strictly convex initial hypersurfaces smoothly into round spheres. By suitably modifying the nonlocal term, other geometric quantities can also be constrained, see \cite{Coy04} for further developments. More general extensions have been established in \cite{And01, Coy05, CM07, Sin15, AW19}, among others.

Beyond using nonlocal terms to constrain the evolution of geometric quantities, Guan and Li \cite{GL15} employed the Minkowski identity to construct a mean curvature type flow in space forms that preserves the volume enclosed by hypersurfaces. In the Euclidean space $\mathbb{R}^{n+1}$, the flow is given by
\begin{align*}
	\frac{\partial\mathbf{X}}{\partial t}=\left(n-Hu\right)\nu,
\end{align*}
where $H$ denotes the mean curvature and $u$ is the support function. A notable feature of this flow is that it is locally constrained—the velocity is defined pointwise and does not involve any global terms. Compared to classical nonlocal constrained flows, this flow imposes weaker restrictions on the initial hypersurface, and the $C^0$-estimates come for free due to the maximum principle. Guan and Li proved that the flow exists for all time and converges smoothly to a round sphere. This result was later extended to warped product spaces in joint work with Wang \cite{GLW19}. Further generalizations can be found in \cite{WX21}. Additionally, Brendle, Guan, and Li introduced an inverse-type locally constrained curvature flow, see related work in \cite{SX19, HLW22}. In the second part of this paper, we propose a locally constrained curvature type curve flow defined on the light-cone
\begin{align}\label{length pre flow 1}
	\frac{\partial\mathbf{X}}{\partial t}=\left(r^{\frac{1}{2}}\kappa+\frac{1}{2}r^{-\frac{3}{2}}\right)\mathbf{X},
\end{align}
where $r$ is the  radial function, and $\kappa$ denotes the curvature of the curve on the light-cone. Our main result is as follows.
\begin{thm}\label{thm behavior}
	Let $\mathbf{X}_0$ be a smooth, closed, space-like curve on the future-pointing light-cone $LC^*$. Then the flow \eqref{length pre flow 1} exists for all time and converges smoothly to a circle whose length equals that of the initial curve.
\end{thm}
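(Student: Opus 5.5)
The plan is to reduce the flow to a scalar parabolic equation on $\S^1$ and then apply maximum-principle arguments together with an energy decay estimate. Since $\mathbf{X}_0$ is a closed space-like curve in $LC^*\subset\mathbb{E}^3_1$, its projection to $\S^1$ is a diffeomorphism, so we can write
$$
\mathbf{X}(\theta,t)=r(\theta,t)\,(1,\cos\theta,\sin\theta),\qquad \theta\in\S^1 .
$$
Conversely, every positive smooth $r$ gives a closed space-like curve, since $\eta(\mathbf{X}_\theta,\mathbf{X}_\theta)=r^2$. Consequently space-likeness is preserved as long as $r>0$. The length element is $ds=r\,d\theta$, so $\mathcal{L}=\int_{\S^1} r\,d\theta$. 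Because the velocity in \eqref{length pre flow 1} is a multiple of $\mathbf{X}$, $\theta$ stays fixed and the flow is exactly
$$
r_t=rU,\qquad U=r^{\frac12}\kappa+\tfrac12 r^{-\frac32}.
$$

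\emph{Step 1 (the curvature in the radial function).} With $\rho=1/r$ and $'=\partial_\theta$, I expect the light-cone curvature to be
$$
\kappa=-\rho\rho''+\tfrac12(\rho'^2-\rho^2).
$$
I would check this against the umbilicity statement: for the section $\eta(\mathbf{X},\mathbf{v})=1$ one has $\rho=-v_0+v_1\cos\theta+v_2\sin\theta$, and the formula gives the constant value $\kappa=\eta(\mathbf{v},\mathbf{v})/2$. I would also derive it directly from the paper's definition of $h_{ij}$ for $n=1$. Fixing this formula, and in particular its sign convention, is the step I expect to be the real obstacle; everything afterwards is mechanical.

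Granting the formula, I substitute $w=r^{-1/2}$, so that $\rho=w^2$. A short computation gives
$$
r^{\frac32}\kappa=-2w''-\tfrac12 w,\qquad r_t=-2w''.
$$
Since $r_t=-2w^{-3}w_t$, the flow becomes the quasilinear equation
$$
w_t=w^3w''\qquad\text{on }\S^1 .
$$
Length preservation is then immediate:
$$
\frac{d\mathcal{L}}{dt}=\int_{\S^1} r_t\,d\theta=-2\int_{\S^1} w''\,d\theta=0 .
$$
This is consistent with the first variation $\frac{d}{dt}\int_M dV=\int_M U\,dV$ from Theorem \ref{main thm A} with $f=1$, $n=1$.

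\emph{Step 2 (a priori estimates).} By the maximum principle, $\max w$ is nonincreasing and $\min w$ is nondecreasing. Hence $0<\min w_0\le w\le \max w_0$, which keeps $r$ bounded and positive and makes the equation uniformly parabolic. Differentiating gives, for $v=w'$,
$$
v_t=w^3v''+3w^2v\,w'' .
$$
At an interior extremum of $v$ we have $v'=w''=0$, so the reaction term vanishes there, and $\sup|w'|$ is nonincreasing. With $C^0$ and $C^1$ bounds and uniform parabolicity, Krylov--Safonov gives a $C^{1,\alpha}$ bound on $w'$ in space-time. Schauder theory and bootstrapping then give uniform $C^k$ bounds for all $k$, so the solution exists for all time.

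\emph{Step 3 (convergence).} For the Dirichlet energy,
$$
\frac{d}{dt}\int_{\S^1} w'^2\,d\theta=-2\int_{\S^1} w''w_t\,d\theta=-2\int_{\S^1} w^3w''^2\,d\theta\le -2c\int_{\S^1} w''^2\,d\theta\le -2c\int_{\S^1} w'^2\,d\theta,
$$
where $c=(\min w_0)^3$ and the last step is Wirtinger's inequality applied to $w'$, which has mean zero. Thus $\int w'^2$ decays exponentially. Interpolating with the uniform $C^k$ bounds yields exponential $C^k$ convergence of $w$ to a constant $w_\infty$.

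The limit has $r$ constant, so it is the circle $\{t=r=\text{const}\}$, which is the section of $LC^*$ by a plane orthogonal to a time-like vector. Length preservation fixes $2\pi w_\infty^{-2}=\mathcal{L}(\mathbf{X}_0)$. Hence the limit circle has length equal to that of the initial curve, which proves Theorem \ref{thm behavior}.
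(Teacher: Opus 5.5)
Your proposal is correct, including the curvature formula you flagged as the main risk. For $n=1$, \eqref{kappa} gives $\kappa=r^{-3}r_{\theta\theta}-\tfrac32 r^{-4}r_\theta^2-\tfrac12 r^{-2}$, which is exactly $-\rho\rho''+\tfrac12(\rho'^2-\rho^2)$ with $\rho=1/r$. Your equation $w_t=w^3w_{\theta\theta}$ is then the paper's \eqref{reduce} rewritten through $r=w^{-2}$.

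After that point your route differs from the paper's. The paper stays with $r$, where $r_t=(r^{-3/2}r_\theta)_\theta$.
\begin{itemize}
\item For the gradient bound it uses the auxiliary function $\mathcal{Q}=\log(\tfrac12 r_\theta^2)-4\log r$, which is $\log(2w^2w_\theta^2)$ in your variable. Its evolution contains the favourable term $-\tfrac12 r^{-7/2}r_\theta^2$.
\item For convergence it uses the monotone energy $\oint r^2\,d\theta$, whose dissipation is $2\oint r^{-3/2}r_\theta^2\,d\theta$. A uniform bound on the time derivative of this dissipation, coming from the $C^k$ estimates, forces $\oint r_\theta^2\,d\theta\to0$. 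An Arzel\`a--Ascoli subsequence argument plus length preservation then identifies the limit, with no rate.
\end{itemize}

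Your substitution simplifies both steps.
\begin{itemize}
\item $v=w_\theta$ solves $v_t=(w^3v_\theta)_\theta$, which has no zeroth-order term, so $\sup|w_\theta|$ is monotone immediately.
\item The Dirichlet energy of $w$ together with Wirtinger's inequality gives exponential decay. This yields exponential smooth convergence to a uniquely determined limit, which is quantitatively stronger than the paper's conclusion.
\end{itemize}

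There are two minor slips.
\begin{itemize}
\item Krylov--Safonov applied to $v$ yields a H\"older bound on $w_\theta$, that is, $C^{1,\alpha}$ for $w$, not $C^{1,\alpha}$ for $w_\theta$.
\item Space-likeness only makes the projection to $\S^1$ a covering map. It has degree one for an embedded curve, which is the case the paper also tacitly assumes. For an $m$-fold winding curve the same argument runs on $\mathbb{R}/2\pi m\mathbb{Z}$, with constant $m^{-2}$ in Wirtinger's inequality.
\end{itemize}
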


The paper is organized as follows. In Section \ref{sec-pre}, we derive the structure equations, the integrability conditions, and the relations among the associated quantities on the $(n+1)$--dimensional light-cone $LC^*$. In Section \ref{sec-prop}, we study several properties of hypersurfaces in $LC^*$. In Section \ref{sec-evo}, we investigate the evolution of hypersurfaces under a geometric flow on $LC^*$ and establish variational formulas for integrals of the form $\displaystyle \int_M f(S_1,\dots,S_n)dV$. Finally, in Section \ref{sec-flow}, we consider a curvature-type flow defined locally in the light-cone, and prove long-time existence and smooth convergence to a circle with length preserved equal to that of the initial curve.

\section{Preliminaries}\label{sec-pre}
Let $V$ be an $(n+2)$-dimensional real vector space. A {\it Lorentzian scalar product} on $V$ is a non-degenerate symmetric bilinear form
$\langle\cdot,\cdot\rangle_L$ of index $1$. Equivalently, there exists a basis $\mathbf{e}_1,\cdots,\mathbf{e}_{n+2}$ of $V$ such that
\begin{align*}
  \langle\mathbf{e}_i,\mathbf{e}_j\rangle_L=\left
                \{\begin{array}{cl}
                     -1, & \mathrm{if}\quad i=j=1,  \\
                     1, & \mathrm{if}\quad i=j=2,\ \cdots,\ n+2, \\
                     0, & \mathrm{otherwise}.
                   \end{array}
  \right.
\end{align*}
A {\it Lorentzian manifold} is a pair $(\mathcal{M}, \eta)$, where $\mathcal{M}$ is an $(n+2)$-dimensional smooth
manifold and $\eta$ is a Lorentzian metric, i.e., $\eta$ assigns to each point $p\in \mathcal{M}$ a
Lorentzian scalar product $\eta_p$ on the tangent space $T_p\mathcal{M}$.

The {\it Minkowski space} $\E_1^{n+2}$ is the vector space $\R^{n+2}$ endowed with the Lorentzian scalar product
\[
\langle \mathbf{u}, \mathbf{v} \rangle_L = -u_1 v_1 + u_2 v_2 + \cdots + u_{n+2} v_{n+2}, \quad 
\mathbf{u},\mathbf{v} \in \mathbb{R}^{n+2}.
\]
In Cartesian coordinates $(x_1,\dots,x_{n+2})$, the corresponding {\it Minkowski metric} reads
\[
\eta = -(dx_1)^2 + (dx_2)^2 + \cdots + (dx_{n+2})^2.
\]
A vector $\mathbf{v}=(v_1,\cdots,v_{n+2})\in\E_1^{n+2}$ is classified  according to the sign of $\langle \mathbf v,\mathbf v\rangle_L$:
%In Minkowski space $\E_1^{n+2}$, a vector $\mathbf{v}=(v_1,\cdots,v_{n+2})$ can be classified according to the sign of $\langle \mathbf v,\mathbf v\rangle_L$:
\begin{itemize}
	\item {\it space-like}, if $\langle \mathbf v,\mathbf v\rangle_L >0$ or $\mathbf v=0$;
	\item {\it time-like}, if $\langle \mathbf v,\mathbf v\rangle_L <0$;
	\item {\it light-like} (or {\it null}), if $\langle \mathbf v,\mathbf v\rangle_L=0$ and $\mathbf v\neq 0$.
\end{itemize}
Its magnitude (or length) is defined by $|\mathbf{v}| = \sqrt{|\langle \mathbf{v},\mathbf{v} \rangle_L|}$. Similarly, a \textit{hyperplane }
\[
P(\mathbf{v},c) = \{ \mathbf{x} \in \mathbb{E}_1^{n+2} \mid \langle \mathbf{x}, \mathbf{v} \rangle_L = c \}, \quad \mathbf{v}\neq 0,\, c\in\mathbb{R},
\]
is called space-like, time-like, or light-like according to the type of its \textit{pseudo-normal} vector $\mathbf{v}$.

The Minkowski space contains the following standard submanifolds $(c>0)$:
\begin{itemize}
	\item the {\it hyperbolic space}
	\[
	H^{n+1}(c)=\{\mathbf{x}\in\E^{n+2}_1 \mid \langle\mathbf{x},\mathbf{x}\rangle_L=-c^2\};
	\]
	\item the {\it de Sitter space}
	\[
	S^{n+1}_1(c)=\{\mathbf{x}\in\E^{n+2}_1 \mid \langle\mathbf{x},\mathbf{x}\rangle_L=c^2\};
	\]
	\item the {\it light-cone}
	\[
	LC=\{\mathbf{x}\in\E^{n+2}_1\setminus\{0\} \mid \langle\mathbf{x},\mathbf{x}\rangle_L=0\}.
	\]
\end{itemize}
These submanifolds are collectively known as {\it pseudo-spheres} in $\E^{n+2}_1$.

In polar coordinates, $\E^{n+2}_1$ is represented as $\R\times(0,+\infty)\times\S^n$ with metric
\begin{align*}\label{etadef}
	\eta=-dt^2+dr^2+r^2d\Omega^2,
\end{align*}
where
\begin{align*}
	d\Omega^2\;=\;(d\theta^1)^2+\sin^2\theta^1(d\theta^2)^2+\cdots+\sin^2\theta^1\cdots\sin^2\theta^{n-1}(d\theta^n)^2
\end{align*}
denotes the standard round metric on $\S^n$.
The future-pointing light-cone centered at the origin, denoted as $LC^*$, can be parameterized as
\[
\mathbf{X}=\bigl(r,\; r\cos\theta^1,\; r\sin\theta^1\cos\theta^2,\; r\sin\theta^1\sin\theta^2\cos\theta^3,\;\ldots,\; r\sin\theta^1\cdots sin\theta^n \bigr).
\]
A natural basis of its tangent space is given by  
\begin{align*}
	\Biggl\{ \frac{\partial}{\partial \theta^0} = \frac{\partial}{\partial r},\,
	\frac{\partial}{\partial \theta^1},\,\ldots,\,
	\frac{\partial}{\partial \theta^n} \Biggr\}.
\end{align*}
Define the vector field  
\begin{align*}
	\mathbf{Y} = \tfrac{1}{2}\bigl(-1,\, \cos\theta^1,\, 
	\sin\theta^1\cos\theta^2,\, \ldots,\, 
	\sin\theta^1\cdots\sin\theta^n\bigr).
\end{align*}
Then $\mathbf{Y}$ is a null vector field satisfying
\begin{align*}
	\eta\!\left(\mathbf{Y},\,\frac{\partial}{\partial r}\mathbf{X}\right) = 1, \qquad
	\eta\!\left(\mathbf{Y},\,\frac{\partial}{\partial \theta^i}\mathbf{X}\right) = 0, 
	\quad \forall\, i=1,\ldots,n.
\end{align*}

Consider a local embedding  
\[
\mathbf{X} : \S^n \;\rightarrow\; M^n \;\hookrightarrow\; LC^*.
\]
Let $D$ be the Levi-Civita connection on $\S^n$. 
For a smooth function 
$r(\theta^1,\ldots,\theta^n)$ defined on $\S^n$, we denote by $D_i r$ its covariant 
derivative in the direction $\displaystyle \frac{\partial}{\partial\theta^i}$. Then the tangent vector fields of $M^n$ induced by the embedding are
\begin{align*}
	\mathbf{e}_i \triangleq \mathbf{X}_*\left(\frac{\partial}{\partial \theta^i}\right)
	= D_i r \frac{\partial}{\partial r} + \frac{\partial}{\partial \theta^i}, \quad 1 \le i \le n,
\end{align*}
and the induced metric takes the form
\[
g = r^2 d\Omega^2,
\]
with components
\[
g_{11} = r^2, \quad g_{ii} = r^2 \prod_{l=1}^{i-1} \sin^2 \theta^l \quad (i>1), \quad g_{ij} = 0 \quad (i \neq j).
\]
The corresponding Christoffel symbols are
\begin{align*}
	&\Gamma^i_{ii} = \frac{r_i}{r}, 
	\quad \Gamma_{ij}^k = 0 \;\; (i \neq j, i \neq k, j \neq k), \\
	&\Gamma_{ii}^k = -\frac{r_k}{r} \Biggl(\prod_{l=i}^{k-1} \sin^2 \theta^l \Biggr)^{-1}, \quad i<k, \\
	&\Gamma_{ii}^k = -\frac{r_k}{r} \prod_{l=k}^{i-1} \sin^2 \theta^l - \frac{\sin(2\theta^k)}{2} \prod_{l=k+1}^{i-1} \sin^2 \theta^l, \quad i>k, \\
	&\Gamma_{ij}^i = \frac{r_j}{r} + \frac{\cos \theta^j}{\sin \theta^j}, \quad i>j, \\
	&\Gamma_{ij}^i = \frac{r_j}{r}, \quad i<j.
\end{align*}

    We now compute the null vector field $\mathbf{L}$ corresponding to $\mathbf{X}$, which, together with $\mathbf{X}$, forms a normal frame for $M^n$.
     \begin{prop}\label{prop-unique}
     	There exists a unique vector $\mathbf{L}$ satisfying that
     	\begin{align*}
     		\eta(\mathbf{L},\mathbf{X})=1,\; \eta(\mathbf{L},\mathbf{L})=0,\;\eta(\mathbf{L},\mathbf{e}_i)=0,\forall\;i=1,\cdots,n.
     	\end{align*}
     \end{prop}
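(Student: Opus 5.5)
Since $\mathbf{X}$ is null and $\eta(\mathbf{X},\mathbf{e}_i)=0$, the $(n+2)$-dimensional space $\E^{n+2}_1$ at a point of $M^n$ splits as $T_pM\oplus N_p$, where $N_p$ is the $2$-dimensional Lorentzian normal plane containing $\mathbf{X}$. I will show that inside this plane there is exactly one null direction transverse to $\mathbf{X}$, and that the normalization $\eta(\mathbf{L},\mathbf{X})=1$ then fixes $\mathbf{L}$.

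First I check that the induced metric $g=r^2d\Omega^2$ on $\mathrm{span}\{\mathbf{e}_1,\dots,\mathbf{e}_n\}$ is positive definite, hence nondegenerate. The orthogonal complement $N_p$ is therefore $2$-dimensional with signature $(-,+)$, and $\E^{n+2}_1=T_pM\oplus N_p$. The conditions $\eta(\mathbf{L},\mathbf{e}_i)=0$ force $\mathbf{L}\in N_p$. Since $\mathbf{X}\in N_p$ is null and nonzero, a Lorentzian plane has exactly two null lines. So there is a null vector $\mathbf{Z}\in N_p$ with $\eta(\mathbf{X},\mathbf{Z})\neq 0$, and we may normalize it so that $\eta(\mathbf{X},\mathbf{Z})=1$.

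Now write a general $\mathbf{L}=a\mathbf{X}+b\mathbf{Z}$. The condition $\eta(\mathbf{L},\mathbf{X})=b=1$ fixes $b$. Then $\eta(\mathbf{L},\mathbf{L})=2ab=2a=0$ fixes $a=0$. Hence $\mathbf{L}=\mathbf{Z}$, giving both existence and uniqueness.

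For concreteness, and because later sections will need the formula, I will also produce $\mathbf{L}$ explicitly from the vector $\mathbf{Y}$ defined above. Make the ansatz
\[
\mathbf{L}=\alpha\mathbf{Y}+\beta^k\frac{\partial}{\partial\theta^k}+\gamma\frac{\partial}{\partial r}.
\]
Imposing $\eta(\mathbf{L},\mathbf{e}_i)=0$ gives $\alpha D_ir+r^2\sigma_{ik}\beta^k=0$, where $\sigma$ is the round metric, so $\beta^k=-\alpha r^{-2}\sigma^{ki}D_ir$. Imposing $\eta(\mathbf{L},\mathbf{X})=1$, using $\mathbf{X}=r\partial_r$, gives $\alpha r=1$. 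Imposing nullity then determines $\gamma$, producing
\[
\mathbf{L}=\frac1r\mathbf{Y}-\frac{1}{r^3}\,\sigma^{ki}D_ir\,\partial_{\theta^k}-\frac{|Dr|^2_{\sigma}}{2r^3}\,\partial_r.
\]

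The only real subtlety is the degeneracy of the light-cone itself: one must use the nondegeneracy of $M^n$, not of $LC^*$, to get the orthogonal splitting. I expect the main obstacle to be the bookkeeping in the explicit nullity computation, specifically the cross terms $\eta(\mathbf{Y},\partial_\theta)=0$, $\eta(\mathbf{Y},\partial_r)=1$ and $\eta(\partial_r,\partial_r)=0$ on the cone.
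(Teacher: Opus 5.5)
Your argument is correct, but it takes a different route from the paper.

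The paper works in Cartesian components. It writes $\mathbf{L}=(a,a_2,\dots,a_{n+2})$ and converts $\eta(\mathbf{L},\mathbf{e}_i)=0$ and $\eta(\mathbf{L},\mathbf{X})=1$ into a linear system $A\mathbf{x}=\mathbf{b}$ for the spatial components. It then inverts the $(n+1)\times(n+1)$ matrix $A$ explicitly and imposes nullity to get $a=-\frac{1}{2r}(1+|Dr|^2)$. Uniqueness is not argued separately. It follows because $A$ is invertible and the null condition is linear in $a$: the $a$-dependent part of $\mathbf{L}$ is $a(1,\cos\theta^1,\dots)$, a null vector parallel to $\mathbf{X}$.

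You instead get existence and uniqueness from linear algebra alone: nondegeneracy of $T_pM$, the orthogonal splitting, and the two null lines of the Lorentzian normal plane. You then produce an explicit formula in the adapted frame $\{\mathbf{Y},\partial_r,\partial_{\theta^k}\}$. Its Gram matrix is simple enough that the conditions form a triangular system, with $\gamma$ fixed by a linear equation with coefficient $2\alpha=2/r\neq 0$.

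What each approach buys:
\begin{itemize}
\item Your version is shorter and makes the reason for uniqueness transparent. It also holds at every point, whereas the paper's $A^{-1}$ carries factors $1/\sin\theta^l$ and degenerates at the singularities of the spherical chart.
\item The paper's version delivers the Cartesian components that it immediately uses for the decomposition $\mathbf{L}=\frac{a}{r}\mathbf{X}+\mathbf{Z}$ in the following corollary.
\end{itemize}

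As a cross-check, the time component of your formula is $-\frac{1}{2r}-\frac{|Dr|_\sigma^2}{2r^3}$. This matches the paper's $a$ when $|Dr|^2$ there is read as $g^{ij}r_ir_j=r^{-2}|Dr|^2_\sigma$.

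Two small remarks:
\begin{itemize}
\item The step $\eta(\mathbf{X},\mathbf{Z})\neq 0$ deserves its one-line justification. If it vanished, $\mathbf{X}$ would be orthogonal to all of $N_p=\mathrm{span}\{\mathbf{X},\mathbf{Z}\}$, contradicting nondegeneracy.
\item The paper later uses the letter $\mathbf{Z}$ for a different, non-null vector, so choosing another letter would avoid confusion.
\end{itemize}
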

     \begin{proof}
     	Assume that the vector $\mathbf{L}$ takes the form $\mathbf{L}=(a,a_2,\dots,a_{n+2})$. Imposing the conditions $\eta(\mathbf{L},\mathbf{e}_i)=0$ and $\eta(\mathbf{L},\mathbf{X})=1$ leads to
     	\[
     	\eta\Big(\mathbf{L},\frac{\partial}{\partial \theta^i}\Big) = -\frac{r_i}{r}.
     	\]  
     	This gives rise to a non-homogeneous linear system of equations $A\mathbf{x}=\mathbf{b}$, where $\mathbf{x} = (a_2,\dots,a_{n+2})^\mathrm{T}$, and the vector $\mathbf{b}$ is
     	\[
     	\mathbf{b} = \Big(a+\frac{1}{r}, -\frac{r_1}{r^2},\dots,-\frac{r_n}{r^2}\Big)^\mathrm{T}.
     	\]  
     	Each row of the $(n+1)$-order matrix $A$ can be explicitly written as  
     	       \begin{align*}
     		A_1\;=\;&\Big(\cos\theta^1,\;\sin\theta^1\cos\theta^2,\; \sin\theta^1\sin\theta^2\cos\theta^3,\; \cdots,\; \\
     		&\qquad\qquad\qquad \sin\theta^1\cdots\sin\theta^{n-1}\cos\theta^n,\; \sin\theta^1\cdots\sin\theta^{n-1}\sin\theta^n \Big),\;\\
     		A_2\;=\;&\Big(-\sin\theta^1,\; \cos\theta^1\cos\theta^2,\; \cos\theta^1\sin\theta^2\cos\theta^3,\; \cdots,\; \\
     		&\qquad\cos\theta^1\sin\theta^2\cdots\sin\theta^{n-1}\cos\theta^n,\; \cos\theta^1\sin\theta^2\cdots\sin\theta^{n-1}\sin\theta^n\Big),\; \\
     		A_3\;=\;&\Big(0,\;-\sin\theta^1\sin\theta^2,\; \sin\theta^1\cos\theta^2\cos\theta^3,\; \cdots,\; \\
     		&\qquad\qquad\qquad \sin\theta^1\cos\theta^2\sin\theta^3\cdots\sin\theta^{n-1}\cos\theta^n,\; \\
     		&\qquad\qquad\qquad\qquad\sin\theta^1\cos\theta^2\sin\theta^3\cdots\sin\theta^{n-1}\sin\theta^n \Big),\;\\
     		\;\vdots&\\
     		A_{n+1}\;=\;&\Big(0,\; 0,\; 0,\; \cdots,\; -\sin\theta^1\cdots\sin\theta^n,\; \sin\theta^1\cdots\sin\theta^{n-1}\cos\theta^n \Big),\;
     	\end{align*}
     	A direct computation yields the inverse matrix $A^{-1}$, whose rows are given by  
     	 \begin{align*}
     		B_1\;=\;&\Big(\cos\theta^1,\;-\sin\theta^1,\; 0,\; \cdots,\;  0,\; 0 \Big),\;\\
     		B_2\;=\;&\Big(\sin\theta^1\cos\theta^2,\;\cos\theta^1\cos\theta^2,\; -\frac{\sin\theta^2}{\sin\theta^1},\; 0,\; \cdots,\; 0,\; 0\Big),\;\\
     		B_3\;=\;&\Big(\sin\theta^1\sin\theta^2\cos\theta^3,\;\cos\theta^1\sin\theta^2\cos\theta^3,\;   \\
     		&\qquad\qquad\qquad \frac{\cos\theta^2\cos\theta^3}{\sin\theta^1},\; -\frac{\sin\theta^3}{\sin\theta^1\sin\theta^2},\;0,\;\cdots,\; 0,\;0 \Big),\;\\
     		\;\vdots&\\
     		B_{n+1}\;=\;&\Big(\sin\theta^1\cdots\sin\theta^n,\;\cos\theta^1\sin\theta^2\cdots\sin\theta^n,\;  \frac{\cos\theta^2\sin\theta^3\cdots\sin\theta^n}{\sin\theta^1},\; \\
     		&\qquad\qquad\qquad  \frac{\cos\theta^3\sin\theta^4\cdots\sin\theta^n}{\sin\theta^1\sin\theta^2},\;\cdots,\; \frac{\cos\theta^n}{\sin\theta^1\cdots\sin\theta^{n-1}} \Big).
     	\end{align*}
     	Solving for $\mathbf{x}$ yields $\mathbf{x} = A^{-1}\mathbf{b}$, providing explicit expressions for the components of $\mathbf{L}$
     	 \begin{align*}
     		a_2\;=\;&a\cos\theta^1+\frac{\left(r\cos\theta^1+r_1\sin\theta^1\right)}{r^2},\\
     		a_3\;=\;&a\sin\theta^1\cos\theta^2\\
     		&\;+\frac{\left(r\sin^2\theta^1\cos\theta^2-r_1\sin\theta^1\cos\theta^1\cos\theta^2+r_2\sin\theta^2\right)}{r^2\sin\theta^1},\\
     		a_4\;=\;&a\sin\theta^1\sin\theta^2\cos\theta^3\\
     		&\;+\frac{\left(r\sin^2\theta^1\sin^2\theta^2\cos\theta^3-r_1\sin\theta^1\cos\theta^1\sin^2\theta^2\cos\theta^3\right)}{r^2\sin\theta^1\sin\theta^2}\\
     		&\;\quad+\frac{\left(-r_2\sin\theta^2\cos\theta^2\cos\theta^3+r_3\sin\theta^3\right)}{r^2\sin\theta^1\sin\theta^2},\\
     		\;\vdots&\\
     		a_{n+2}\;=\;&a\sin\theta^1\cdots\sin\theta^n\\
     		&\;+\frac{r\sin^2\theta^1\cdots\sin^2\theta^{n-1}\cos\theta^n}{r^2\sin\theta^1\cdots\sin\theta^{n-1}}\\
     		&\;\;+\frac{\left(-r_1\sin\theta^1\cos\theta^1\sin^2\theta^2\cdots\sin^2\theta^{n-1}\sin\theta^n-\cdots-r_n\cos\theta^n\right)}{r^2\sin\theta^1\cdots\sin\theta^{n-1}}.\\
     	\end{align*}       	
     	Finally, imposing the null condition $-a^2 + a_2^2 + \cdots + a_{n+2}^2 = 0$ yields
     	\[
     	a = -\frac{1}{2r}\Big(1 + |Dr|^2\Big),
     	\]  
     	which completes the proof.
     \end{proof}
\begin{cor}
	The vector $\mathbf{L}$ can be decomposed as
	\[
	\mathbf{L} = \frac{a}{r}\mathbf{X} + \mathbf{Z},
	\]
	where
	\[
	\mathbf{Z} = \left(0,\, a_2 - a\cos\theta^1,\, a_3 - a\sin\theta^1\cos\theta^2,\, \dots,\, a_{n+2} - a\sin\theta^1\cdots\sin\theta^n\right),
	\]
	which satisfies
	\[
	\eta(\mathbf{X}, \mathbf{Z}) = 1, \qquad \eta(\mathbf{Z}, \mathbf{Z}) = -\tfrac{2a}{r}.
	\]
\end{cor}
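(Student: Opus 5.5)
The decomposition is immediate once we observe that the first component of $\mathbf{X}$ is $r$. Its remaining components are $r$ times the unit vector
\[
\mathbf{u}=(\cos\theta^1,\ \sin\theta^1\cos\theta^2,\ \dots,\ \sin\theta^1\cdots\sin\theta^n)\in\S^n .
\]
Hence $\frac{a}{r}\mathbf{X}=(a,\ a\cos\theta^1,\ \dots,\ a\sin\theta^1\cdots\sin\theta^n)$. Subtracting this from $\mathbf{L}=(a,a_2,\dots,a_{n+2})$, as given in the proof of Proposition \ref{prop-unique}, yields exactly the stated $\mathbf{Z}$, with first component $0$. So $\mathbf{L}=\frac{a}{r}\mathbf{X}+\mathbf{Z}$ holds by definition of $\mathbf{Z}$.

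For the two identities I would avoid the explicit component formulas and use only the defining relations of $\mathbf{L}$ together with $\eta(\mathbf{X},\mathbf{X})=0$ (since $\mathbf{X}\in LC^*$). First,
\[
\eta(\mathbf{X},\mathbf{Z})=\eta(\mathbf{X},\mathbf{L})-\frac{a}{r}\eta(\mathbf{X},\mathbf{X})=1-0=1 .
\]
Second, expanding $0=\eta(\mathbf{L},\mathbf{L})$ gives
\[
0=\frac{a^2}{r^2}\eta(\mathbf{X},\mathbf{X})+\frac{2a}{r}\eta(\mathbf{X},\mathbf{Z})+\eta(\mathbf{Z},\mathbf{Z})=\frac{2a}{r}+\eta(\mathbf{Z},\mathbf{Z}),
\]
so $\eta(\mathbf{Z},\mathbf{Z})=-\frac{2a}{r}$.

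There is no real obstacle; the only point to check is that the $a$-dependent parts of the $a_k$ are precisely $a$ times the components of $\mathbf{u}$. This is visible from the explicit formulas, because $A^{-1}$ sends $(1,0,\dots,0)^{\mathrm T}$ to the first column of $A^{-1}$, which is $\mathbf{u}$. As a consistency check, $\mathbf{Z}$ has vanishing time component, so $\eta(\mathbf{Z},\mathbf{Z})$ is its Euclidean squared length, which must be nonnegative. Indeed $-\frac{2a}{r}=\frac{1+|Dr|^2}{r^2}>0$, which agrees.
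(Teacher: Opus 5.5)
Your proof is correct: since the first component of $\mathbf{X}$ is $r$, $\mathbf{Z}$ is by construction $\mathbf{L}-\frac{a}{r}\mathbf{X}$, and both identities follow from expanding $\eta(\mathbf{L},\mathbf{X})=1$ and $\eta(\mathbf{L},\mathbf{L})=0$ with $\eta(\mathbf{X},\mathbf{X})=0$. The paper gives no separate proof and treats the corollary as immediate from Proposition~\ref{prop-unique}; your argument is that direct verification, and it avoids summing the squares of the explicit components.
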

     With the normal frame $\{\mathbf{X}, \mathbf{L}\}$, the structure equations take the form
     \begin{gather}\label{eqn-stru}
     	\begin{split}
     		\bar{\nabla}_i\mathbf{e}_j &= \nabla_i\mathbf{e}_j - g_{ij} \mathbf{L} + h_{ij} \mathbf{X},\\
     		\bar{\nabla}_i\mathbf{X} &= \mathbf{e}_i, \quad \bar{\nabla}_i\mathbf{L} = - h_i^j \mathbf{e}_j.
     	\end{split}
     \end{gather}  
     where $\bar{\nabla}$ denotes the flat connection of the ambient space $\E^{n+2}_1$, $\nabla$ is the Levi-Civita connection on $M^n$ with respect to the metric $g$, and $h_{ij}$ represent the components of the second fundamental form. Note that throughout this paper, we adopt the following conventions for brevity and consistency: The shorthand notation $\nabla_i$ is used to denote  covariant derivative $\nabla_{\mathbf{e}_i}$.
     Subscripts following a comma (e.g., $r_{i,j}$) signify  covariant differentiation with respect to the metric $g_{ij}$. Indices are raised and lowered using the metric $g_{ij}$ and its inverse $g^{ij}$, respectively, unless explicitly stated otherwise.
     
     A straightforward computation shows that the second fundamental form is given by
     \begin{align*}
	h_{ij}=-\frac{a}{r}g_{ij}-\mu_{ij}-\nu_{ij}=\frac{a}{r}g_{ij}+\frac{r_{i,j}}{r},
\end{align*}
     where
     \begin{align*}
     	\mu_{ij} &= \frac{g_{ij}}{r^2} + \frac{2 r_i r_j}{r^2} - \frac{r_{ij}}{r},\\
     	\nu_{ij} &= \frac{r_j \cos\theta^i}{r \sin\theta^i}, \quad i<j,\\
     	\nu_{ii} &= -\frac{1}{2r} \sum_{k=1}^{i-1} r_k \sin(2\theta^k) \prod_{s=k+1}^{i-1} \sin^2\theta^s.
     \end{align*}  
     Consequently, the mixed form and trace of $h$ are
     \begin{align}\label{kappa}
     	h_i^j = \frac{a}{r} \delta_i^j + \frac{r^j_{,i}}{r}, \qquad g^{ij} h_{ij} = \frac{n a}{r} + \frac{\Delta_g r}{r}.
     \end{align}
     The curvature tensor and its contractions are given by
          \begin{align*}
     	R^l_{jki}&\;=\;\frac{\partial\Gamma^l_{ij}}{\partial\theta^k}-\frac{\partial\Gamma^l_{kj}}{\partial\theta^i}+\Gamma^m_{ij}\Gamma^l_{mk}-\Gamma^m_{kj}\Gamma^l_{mi}\\
     	&\;=\;g_{kj}h^l_i+h_{kj}\delta^l_i-g_{ij}h^l_k-h_{ij}\delta^l_k.
     \end{align*}
     \begin{align*}
     	R_{ji}=R^{l}_{jli}=\left(2-n\right)h_{ij}-g_{ij}h^l_l.
     \end{align*}
     \begin{align*}
     	R=g^{ij}R_{ij}=2(1-n)h^l_l.
     \end{align*}
     Moreover, the Codazzi symmetry holds
     \[
     h_{ij,k} = h_{ik,j}.
     \]
    
%     \begin{gather*}
%       \chi_{11}=1+\frac{3r_1^2}{r^2}-\frac{2r_{11}}{r}-\frac{r_2^2}{r^2\sin^2\theta^1},\\
%       \chi_{12}=\frac{4r_1r_2}{r^2}-\frac{2r_{12}}{r}+\frac{2r_1r_2\cos\theta^1}{r^2\sin\theta^1},\\
%       \chi_{22}=\sin^2\theta^1+\frac{3r_2^2}{r^2}-\frac{2r_{22}}{r}-\frac{r_1^2\sin^2\theta^1}{r^2}-\frac{2r_1\sin\theta^1\cos\theta^1}{r}.
%     \end{gather*}
\section{Some properties}\label{sec-prop}
Let $S_r$ denote the $r$-th elementary symmetric function of the eigenvalues $k_1,\dots,k_n$ of the matrix $\Phi=(h^i_j)$, namely,
\[
S_0=1,\quad S_1=k_1+\cdots+k_n,\quad \dots,\quad S_n=k_1\cdots k_n.
\]
The Newton transformations $T_r$ are defined recursively by
\[
T_0{}^i_j=\delta^i_j,\qquad
T_{r+1}{}^i_j = S_{r+1}\delta^i_j - T_r{}^{ik} h_{kj},\quad r=0,1,\cdots,n-1.
\]
Using the generalized Kronecker symbol ${\epsilon_{i_1\cdots i_q}}^{j_1\cdots j_q}$, 
$T_r$ can be written as
\[
{T_r}^j_i = \frac{1}{r!}\,{\epsilon_{i_1\cdots i_ri}}^{j_1\cdots j_rj}\,
h_{j_1}^{i_1}\cdots h_{j_r}^{i_r},
\]
where ${\epsilon_{i_1\cdots i_q}}^{j_1\cdots j_q}$ equals $+1$ (even permutation), $-1$ (odd permutation), or $0$ otherwise.
The Newton transformations satisfy the following properties. For details, see \cite{rei}.
\begin{prop}\label{N prop}
	For each $r = 0, 1, \dots, n-1$, the following properties hold:
	\begin{itemize}[left=12pt, labelsep=5pt, itemsep=-4pt]
		\item[$\mathrm{(i)}$] $T_r \Phi = \Phi T_r$, and $T_n = 0$.\\
		\item[$\mathrm{(ii)}$] \emph{Newton's formula:}
		\[
		(r+1) S_{r+1} = \mathrm{Trace}(\Phi T_r).
		\]
		\item[$\mathrm{(iii)}$] If $\Phi = \Phi(t)$ depends smoothly on a parameter $t$, then
		\[
		\frac{d}{dt} S_{r+1} = \mathrm{Trace}\Big(\frac{d\Phi}{dt} \cdot T_r\Big).
		\]
		\item[$\mathrm{(iv)}$] The Newton tensors are divergence-free
		\[
		{{T_r}^{ij}}_{,j} = 0.
		\]
	\end{itemize}
\end{prop}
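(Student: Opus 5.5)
The plan is to work throughout with the generalized Kronecker symbol representation
\[
{T_r}^j_i=\frac{1}{r!}\,{\epsilon_{i_1\cdots i_ri}}^{j_1\cdots j_rj}\,h^{i_1}_{j_1}\cdots h^{i_r}_{j_r},
\qquad
S_{r}=\frac{1}{r!}\,{\epsilon_{i_1\cdots i_r}}^{j_1\cdots j_r}\,h^{i_1}_{j_1}\cdots h^{i_r}_{j_r}.
\]
Before using it, I would check by induction on $r$ that this formula agrees with the recursive definition. Expanding the $(r+1)$-index symbol along its last index gives ${\epsilon_{i_1\cdots i_r i}}^{j_1\cdots j_r j}=\delta^j_i{\epsilon_{i_1\cdots i_r}}^{j_1\cdots j_r}-\sum_k(\cdots)$, and this reproduces $T_{r}=S_{r}I-T_{r-1}\Phi$. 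Items (i)--(iii) are then purely pointwise linear algebra and do not depend on the light-cone geometry. Item (iv) is the only place where the geometry enters, through the Codazzi symmetry $h_{ij,k}=h_{ik,j}$ established in Section \ref{sec-pre}.

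For (i), unwinding the recursion gives $T_r=\sum_{k=0}^r(-1)^kS_{r-k}\Phi^k$. Since $T_r$ is a polynomial in $\Phi$, it commutes with $\Phi$. Moreover, $T_n$ is exactly the characteristic polynomial of $\Phi$ evaluated at $\Phi$, up to sign, so $T_n=0$ by Cayley--Hamilton.

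For (ii), I would compute $\mathrm{Trace}(\Phi T_r)=h^i_j{T_r}^j_i$ in the Kronecker form. This equals $\frac{1}{r!}{\epsilon_{i_1\cdots i_ri}}^{j_1\cdots j_rj}h^{i_1}_{j_1}\cdots h^{i_r}_{j_r}h^{i}_{j}$. Comparing with the formula for $S_{r+1}$, this is $\frac{(r+1)!}{r!}S_{r+1}=(r+1)S_{r+1}$.

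For (iii), I would differentiate the Kronecker formula for $S_{r+1}$ in $t$ by the product rule, which produces $r+1$ terms. Because the symbol is invariant under simultaneously permuting upper and lower index pairs, all $r+1$ terms are equal. Their sum is therefore $\frac{1}{r!}\epsilon\,h\cdots h\,\dot h^i_j=\mathrm{Trace}(\dot\Phi\,T_r)$.

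For (iv), which I expect to be the main point, I would take the covariant divergence of the Kronecker expression. Since $\nabla$ is the Levi-Civita connection of $g$, raising and lowering indices commutes with differentiation, and the constant symbol $\epsilon$ passes through the derivative. This gives
\[
{{T_r}^{j}_{i}}_{,j}=\frac{1}{r!}\sum_{k=1}^r{\epsilon_{i_1\cdots i_ri}}^{j_1\cdots j_rj}\,h^{i_1}_{j_1}\cdots h^{i_k}_{j_k,j}\cdots h^{i_r}_{j_r}.
\]
By the Codazzi symmetry, together with symmetry of $h_{ij}$ and metric compatibility, $h^{i_k}_{j_k,j}$ is symmetric in $(j_k,j)$, while $\epsilon$ is antisymmetric in those two upper indices. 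Hence each summand vanishes, giving ${T_r}^{ij}{}_{,j}=0$.

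The only care needed is to confirm that the Codazzi identity holds in the light-cone setting, with its degenerate normal bundle. It does here, because the structure equations \eqref{eqn-stru} give it directly: taking the $\mathbf X$-component of $\bar\nabla_k\bar\nabla_i\mathbf e_j-\bar\nabla_i\bar\nabla_k\mathbf e_j=0$ and using $\bar\nabla\mathbf X=\mathbf e$ yields $h_{ij,k}=h_{kj,i}$, as recorded in Section \ref{sec-pre}.
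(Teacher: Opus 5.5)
Your proof is correct and takes essentially the same route as the source the paper relies on. The paper gives no proof of its own and only points to Reilly, whose argument is exactly this: the generalized Kronecker symbol representation makes (i)--(iii) pure linear algebra, and (iv) reduces to the Codazzi symmetry $h_{ij,k}=h_{ik,j}$. Your extra check that Codazzi still holds in the light-cone setting, obtained by pairing the commuted derivatives of $\mathbf{e}_j$ with $\mathbf{L}$ using the structure equations \eqref{eqn-stru}, is precisely what is needed to carry Reilly's space-form argument over to this setting.
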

The following proposition characterizes hypersurfaces lying in a hyperplane in terms of their shape operator.
\begin{prop}\label{prop-plane}
	The hypersurface $M^n$ lies on a hyperplane $P(\mathbf{v},1)$ in $\E^{n+2}_1$ if and only if
	\begin{align*}
		h^j_i=c\delta^j_i,
	\end{align*}
	where $c$ is a constant.
\end{prop}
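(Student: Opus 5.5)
The plan is to work entirely with the structure equations \eqref{eqn-stru} in the moving frame $\{\mathbf{e}_1,\dots,\mathbf{e}_n,\mathbf{X},\mathbf{L}\}$ of $\E^{n+2}_1$ along $M^n$, with $\mathbf{L}$ the null vector field of Proposition \ref{prop-unique}. Throughout I assume $M^n$ is connected, since ``$c$ is a constant'' only makes sense componentwise otherwise.

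\emph{Necessity.} Suppose $\eta(\mathbf{X},\mathbf{v})=1$ on $M^n$ for a fixed vector $\mathbf{v}$.
\begin{enumerate}
\item Differentiating once with $\bar\nabla_i\mathbf{X}=\mathbf{e}_i$ gives $\eta(\mathbf{e}_i,\mathbf{v})=0$ for all $i$.
\item Differentiating again and inserting $\bar\nabla_i\mathbf{e}_j=\nabla_i\mathbf{e}_j-g_{ij}\mathbf{L}+h_{ij}\mathbf{X}$ gives
\[
0=\eta(\nabla_i\mathbf{e}_j,\mathbf{v})-g_{ij}\,\eta(\mathbf{L},\mathbf{v})+h_{ij}\,\eta(\mathbf{X},\mathbf{v}).
\]
The first term vanishes because $\nabla_i\mathbf{e}_j$ is tangent and $\mathbf{v}$ is orthogonal to every $\mathbf{e}_k$. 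Using $\eta(\mathbf{X},\mathbf{v})=1$, this becomes $h_{ij}=c\,g_{ij}$ with $c:=\eta(\mathbf{L},\mathbf{v})$, that is, $h^j_i=c\,\delta^j_i$.
\item To see that $c$ is constant, differentiate once more with $\bar\nabla_i\mathbf{L}=-h_i^j\mathbf{e}_j$. This gives $c_{,i}=-h_i^j\,\eta(\mathbf{e}_j,\mathbf{v})=0$, so $c$ is locally constant and hence constant.
\end{enumerate}

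As a by-product, $\mathbf{v}$ can be identified explicitly. Write $\mathbf{v}=\alpha\mathbf{X}+\beta\mathbf{L}+w^k\mathbf{e}_k$. The induced metric $g=r^2d\Omega^2$ is nondegenerate, so $\eta(\mathbf{v},\mathbf{e}_i)=0$ forces $w=0$. The pairings $\eta(\mathbf{X},\mathbf{L})=1$, $\eta(\mathbf{X},\mathbf{X})=\eta(\mathbf{L},\mathbf{L})=0$ then give $\beta=1$ and $\alpha=c$. Hence $\mathbf{v}=c\mathbf{X}+\mathbf{L}$ and $\eta(\mathbf{v},\mathbf{v})=2c$, matching the remark relating the sign of $c$ to the causal type of $\mathbf{v}$.

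\emph{Sufficiency.} Suppose $h^j_i=c\,\delta^j_i$ with $c$ constant. Guided by the computation above, set $\mathbf{v}:=c\mathbf{X}+\mathbf{L}$ along $M^n$.
\begin{enumerate}
\item By the structure equations, $\bar\nabla_i\mathbf{v}=c\,\mathbf{e}_i-h_i^j\mathbf{e}_j=c\,\mathbf{e}_i-c\,\mathbf{e}_i=0$. By connectedness, $\mathbf{v}$ is therefore a constant vector of $\E^{n+2}_1$.
\item We have $\eta(\mathbf{X},\mathbf{v})=c\,\eta(\mathbf{X},\mathbf{X})+\eta(\mathbf{X},\mathbf{L})=0+1=1$. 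This shows both that $\mathbf{v}\neq 0$ and that $M^n\subset P(\mathbf{v},1)$.
\end{enumerate}

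I do not expect a serious obstacle. The only points needing care are that the constancy of $c$ in the necessity part must be derived (via the $\mathbf{L}$-equation) rather than assumed, and that the tangential part of $\mathbf{v}$ vanishes only because the induced metric is nondegenerate even though the ambient light-cone metric is degenerate.
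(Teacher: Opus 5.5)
Your proof is correct and is essentially the paper's. The sufficiency direction is identical: the paper also uses $\bar\nabla_i(\mathbf{L}+c\mathbf{X})=0$ to take $\mathbf{v}=\mathbf{L}+c\mathbf{X}$. For necessity the only difference is the order of steps. The paper first identifies $\mathbf{L}=\mathbf{v}-\tfrac{\eta(\mathbf{v},\mathbf{v})}{2}\mathbf{X}$ via the uniqueness in Proposition \ref{prop-unique}, which is your ``by-product'' $\mathbf{v}=c\mathbf{X}+\mathbf{L}$. It then applies $\bar\nabla_i\mathbf{L}=-h_i^j\mathbf{e}_j$, so $c=\tfrac{\eta(\mathbf{v},\mathbf{v})}{2}$ is constant for free. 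You instead read off $h_{ij}=\eta(\mathbf{L},\mathbf{v})g_{ij}$ from the Gauss-type equation and need one more differentiation to get constancy.
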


\begin{proof}
	If the hypersurface $M^n$ lies on a hyperplane $P(\mathbf{v},1)$, then
	\begin{align*}
		\eta(\mathbf{X},\mathbf{v})=1, \qquad \eta(\mathbf{e}_i,\mathbf{v})=0.
	\end{align*}
	By Proposition \ref{prop-unique}, we have $\mathbf{L}=-\tfrac{\eta(\mathbf{v},\mathbf{v})}{2}\mathbf{X}+\mathbf{v}.$
	Then from \eqref{eqn-stru}, it follows that
	\begin{align*}
		h^j_i=\frac{\eta(\mathbf{v},\mathbf{v})}{2}\delta^j_i.
	\end{align*}
	Conversely, if $h^j_i=c\delta^j_i$, then by \eqref{eqn-stru} one finds $\mathbf{L}=-c\mathbf{X}+\mathbf{v},$
	where $\mathbf{v}$ is a constant vector. This implies $\eta(\mathbf{X},\mathbf{v})=1$, hence $M^n$ lies in the hyperplane $P(\mathbf{v},1)$.
\end{proof}
%\begin{align*}
%  R^i_j=(2-n)h^i_j-\delta^i_jh^l_l,\quad R^i_{j,i}=(1-n)h^i_{i,j}=\frac{1}{2}R_j.
%\end{align*}
%
%\begin{align*}
%  t_{ij}=\frac{1}{2}R\delta_{ij}-R_{ij}, \quad\mathrm{div}(T)=0.
%\end{align*}
If we denote
\begin{align*}
	S=h^i_jh^j_i,\qquad f_3=h^i_jh^j_kh^k_i,
\end{align*}
then the following relations hold
\begin{align*}
	\Delta h_{ij}&=\left(S_1\right)_{,ij}+g_{ij}S-nh_{ik}h^k_j,\\
	\tfrac{1}{2}\Delta S&=\left(S_1\right)^j_ih^i_j-nf_3+S_1S
	+g^{li}g^{mj}g^{pk}h_{lm,p}h_{ij,k},\\
	h_{ij,kl}-h_{ij,lk}&=g_{kj}h_{im}h^m_l-g_{jl}h_{im}h^m_k
	+g_{ik}h_{jm}h^m_l-g_{il}h_{jm}h^m_k.
\end{align*}
As a direct consequence, we obtain the following integral relation for $1$-extremal hypersurfaces.
\begin{prop}
	Let $\mathbf{X}:M^n\rightarrow LC^*$ be a closed orientable hypersurface with $S_1=0$. Then
	\begin{align*}
		n\int_M f_3\,dV=\int_M g^{li}g^{mj}g^{pk}h_{lm,p}h_{ij,k}\,dV \;\geq\;0. 
	\end{align*}
\end{prop}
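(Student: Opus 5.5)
We integrate the Simons-type identity for $\tfrac12\Delta S$ stated just before the proposition over the closed hypersurface $M^n$ and use $S_1=0$ to kill the lower-order terms. Recall that this identity reads
\begin{align*}
	\tfrac{1}{2}\Delta S=\left(S_1\right)^j_i\,h^i_j-nf_3+S_1S+g^{li}g^{mj}g^{pk}h_{lm,p}h_{ij,k},
\end{align*}
where $\left(S_1\right)^j_i$ denotes the Hessian $(S_1)_{,ik}g^{kj}$. It follows from $\Delta h_{ij}=(S_1)_{,ij}+g_{ij}S-nh_{ik}h^k_j$ by contracting with $h^{ij}$ and using
\[
\tfrac12\Delta S=h^{ij}\Delta h_{ij}+|\nabla h|^2,
\]
so we take it as given.

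\emph{Step 1: pointwise simplification.} Under the hypothesis $S_1\equiv 0$ on $M$, the function $S_1$ is constant. Hence its covariant Hessian vanishes identically, so $(S_1)^j_i h^i_j=0$, and the term $S_1S$ also vanishes. The identity therefore reduces pointwise to
\[
\tfrac12\Delta S=-nf_3+g^{li}g^{mj}g^{pk}h_{lm,p}h_{ij,k}.
\]

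\emph{Step 2: integration.} Since $M$ is closed and orientable and $g=r^2d\Omega^2$ is a genuine Riemannian metric on $M$ (positive definite, as $r>0$), the divergence theorem gives $\int_M\Delta S\,dV=0$. Integrating the reduced identity then yields
\[
n\int_M f_3\,dV=\int_M |\nabla h|^2\,dV,
\]
which is the desired equality.

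\emph{Step 3: sign.} The integrand on the right is $g^{li}g^{mj}g^{pk}h_{lm,p}h_{ij,k}=|\nabla h|_g^2$, the full contraction of the $3$-tensor $\nabla h$ with itself using the positive definite metric $g$. It is therefore nonnegative, and the inequality $\ge 0$ follows.

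The only point needing care is Step 1's justification that the induced metric is Riemannian, so that both the divergence theorem and the positivity in Step 3 are legitimate despite the ambient degeneracy of the light-cone. This is exactly where the paper's setup matters: the metric on $M$ is $r^2d\Omega^2$, which is nondegenerate even though the metric on the cone itself is not. Everything else is immediate from the stated identity.
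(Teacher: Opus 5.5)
Your proof is correct and is exactly the argument the paper has in mind when it calls the proposition a direct consequence of the identity for $\tfrac12\Delta S$. With $S_1\equiv 0$ the Hessian term and the $S_1S$ term drop out, $\int_M\Delta S\,dV=0$ on the closed Riemannian manifold $(M,g)$ with $g=r^2d\Omega^2$, and the remaining term is $|\nabla h|_g^2\ge 0$. (The remark about positivity of $g$ belongs to your Steps 2--3 rather than Step 1, but this is only a labeling slip.)
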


\section{Evolution Equations}\label{sec-evo}
Consider a smooth family of immersions $\mathbf{X}: M^n \times [0,T] \to LC^*$
satisfying
\begin{equation}\label{flow}
	\dot{\mathbf{X}}(\theta,t) = U(\theta,t) \mathbf{X}(\theta,t) + W^i(\theta,t) \mathbf{e}_i(\theta,t), \quad \theta = \{\theta^1, \dots, \theta^n\},
\end{equation}
with initial immersion $\mathbf{X}_0 = \mathbf{X}(p,0)$, where $\displaystyle\dot{\mathbf{X}} = \frac{dr}{dt} \frac{\partial}{\partial r} + \frac{d\theta^i}{dt} \frac{\partial}{\partial \theta^i}.$
We first derive the evolution equations for the basic geometric quantities.
\begin{prop}
	Under the flow \eqref{flow}, the following evolution equations hold:
	\begin{enumerate}[leftmargin=20pt]
		\item [\textnormal{(1)}] \quad$\displaystyle\frac{d}{dt} g_{ij} = 2 U g_{ij} + W_{i,j} + W_{j,i}.$
		\item [\textnormal{(2)}] \quad$\displaystyle\frac{d}{dt} h_{ij} = U_{,ij} + W^k_{,j} h_{ki} + W^k_{,i} h_{kj} + W^k h_{kj,i}.$
		\item [\textnormal{(3)}] \quad$\displaystyle\frac{d}{dt} h_i^k = U_{,is} g^{sk} - 2 U h_i^k + W^s_{,i} h_s^k + W^s h_{s,i}^k - h_i^s W^k_{,s}.$
	\end{enumerate}
\end{prop}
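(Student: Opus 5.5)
We differentiate the defining relations $g_{ij}=\eta(\mathbf{e}_i,\mathbf{e}_j)$ and $h_{ij}=\eta(\bar\nabla_i\mathbf{e}_j,\mathbf{L})$ in $t$. The structure equations \eqref{eqn-stru} and the normalizations $\eta(\mathbf{X},\mathbf{X})=0$, $\eta(\mathbf{X},\mathbf{L})=1$, $\eta(\mathbf{L},\mathbf{L})=0$, $\eta(\mathbf{X},\mathbf{e}_i)=\eta(\mathbf{L},\mathbf{e}_i)=0$ are used throughout.

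\textbf{Time derivative of the frame.} Since $\theta$ and $t$ are independent coordinates, $\frac{d}{dt}\mathbf{e}_i=\bar\nabla_i\dot{\mathbf{X}}$. Using $\bar\nabla_i\mathbf{X}=\mathbf{e}_i$ and the Gauss formula $\bar\nabla_i\mathbf{e}_k=\nabla_i\mathbf{e}_k-g_{ik}\mathbf{L}+h_{ik}\mathbf{X}$, we get
\[
\frac{d}{dt}\mathbf{e}_i=U_{,i}\mathbf{X}+U\mathbf{e}_i+W^k_{,i}\mathbf{e}_k-W_i\mathbf{L}+W^kh_{ik}\mathbf{X}.
\]

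\textbf{Item (1).} Pair the formula above with $\mathbf{e}_j$ and symmetrize. The $\mathbf{X}$ and $\mathbf{L}$ components drop out, which gives $2Ug_{ij}+W_{i,j}+W_{j,i}$.

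\textbf{Item (2).} Write $h_{ij}=\eta(\bar\nabla_j\mathbf{e}_i,\mathbf{L})$; this follows from \eqref{eqn-stru}, since only the $\mathbf{X}$ component pairs with $\mathbf{L}$. Then
\[
\frac{d}{dt}h_{ij}=\eta\big(\bar\nabla_j\tfrac{d}{dt}\mathbf{e}_i,\mathbf{L}\big)+\eta\big(\bar\nabla_j\mathbf{e}_i,\tfrac{d}{dt}\mathbf{L}\big).
\]
First we determine $\dot{\mathbf{L}}$ by differentiating its normalizations.
\begin{itemize}
\item From $\eta(\mathbf{L},\mathbf{L})=0$: $\eta(\dot{\mathbf{L}},\mathbf{L})=0$.
\item From $\eta(\mathbf{L},\mathbf{X})=1$ and $\eta(\mathbf{L},\dot{\mathbf{X}})=U$: $\eta(\dot{\mathbf{L}},\mathbf{X})=-U$.
\item From $\eta(\mathbf{L},\mathbf{e}_i)=0$ and $\eta(\mathbf{L},\dot{\mathbf{e}}_i)=U_{,i}+W^kh_{ik}$: $\eta(\dot{\mathbf{L}},\mathbf{e}_i)=-U_{,i}-W^kh_{ik}$.
\end{itemize}
Hence $\dot{\mathbf{L}}=-U\mathbf{L}-(U^{,k}+W^lh_l^k)\mathbf{e}_k$.

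Next, expand $\bar\nabla_j$ of $\dot{\mathbf{e}}_i$ term by term with \eqref{eqn-stru} and pair with $\mathbf{L}$. Only $\mathbf{X}$ components survive, since $\eta(\mathbf{L},\mathbf{L})=\eta(\mathbf{L},\mathbf{e}_k)=0$.
\begin{itemize}
\item $\partial_jU_{,i}$ contributes $\partial_jU_{,i}$.
\item $U_{,i}\mathbf{e}_j$ contributes nothing.
\item $\bar\nabla_j(U\mathbf{e}_i)$ contributes $Uh_{ij}$.
\item $W^k_{,i}\mathbf{e}_k$ contributes $W^k_{,i}h_{kj}$.
\item $-W_i\mathbf{L}$ contributes nothing.
\item $W^kh_{ik}\mathbf{X}$ contributes $\partial_j(W^kh_{ik})$.
\end{itemize}
The second term equals $\eta(\Gamma^k_{ji}\mathbf{e}_k+h_{ij}\mathbf{X},\dot{\mathbf{L}})=-\Gamma^k_{ji}(U_{,k}+W^lh_{lk})-Uh_{ij}$.

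Adding the two pieces, the $Uh_{ij}$ terms cancel. The partial derivatives combine with the Christoffel terms into covariant ones, giving
\[
\frac{d}{dt}h_{ij}=U_{,ij}+W^k_{,i}h_{kj}+W^k_{,j}h_{ik}+W^kh_{ik,j}.
\]
Codazzi symmetry $h_{ik,j}=h_{kj,i}$ puts this in the stated form. This step is the main obstacle: the delicate point is the sign bookkeeping and confirming that the $U h_{ij}$ contributions cancel. Note that $\eta(\mathbf{X},\bar\nabla\mathbf{e})$ is not what defines $h$, so care is needed there.

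\textbf{Item (3).} Write $h_i^k=g^{ks}h_{si}$ and use $\frac{d}{dt}g^{ks}=-g^{ka}g^{sb}\dot g_{ab}$ from (1). This gives
\[
\frac{d}{dt}h_i^k=-2Uh_i^k-(W^{k}{}_{,}{}^{s}+W^{s,k})h_{si}+g^{ks}\dot h_{si}.
\]
Substituting (2), the term $W^{k}{}_{,}{}^{s}h_{si}$ is cancelled by the matching term $W^l_{,s}h_{li}g^{ks}$ from $\dot h$, after relabeling and using the symmetry of $h$. What remains is
\[
U_{,is}g^{sk}-2Uh_i^k+W^s_{,i}h_s^k+W^sh^k_{s,i}-h_i^sW^k_{,s},
\]
as claimed.
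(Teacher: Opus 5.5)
Your proposal is correct and follows the same route as the paper. Both differentiate $g_{ij}=\eta(\mathbf{e}_i,\mathbf{e}_j)$ and $h_{ij}=\eta(\bar\nabla_i\mathbf{e}_j,\mathbf{L})$ in $t$, commute the $t$- and $\theta$-derivatives in the flat ambient space, insert $\dot{\mathbf{L}}=-U\mathbf{L}-(U^{,k}+W^lh_l^k)\mathbf{e}_k$, and then differentiate $g^{ks}h_{si}$. You also derive $\dot{\mathbf{L}}$ from the normalizations and keep $\mathbf{X}$ and $\mathbf{L}$ in their correct places in the Gauss formula, whereas the paper's displayed line for (2) swaps them.

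There is one bookkeeping slip in (3). The term $g^{ks}W^l_{,s}h_{li}$ coming from $\dot h$ cancels $W^{s,k}h_{si}$, not $W^{k}{}_{,}{}^{s}h_{si}$, and this cancellation is pure relabeling, with no appeal to the symmetry of $h$. The term $W^{k}{}_{,}{}^{s}h_{si}$ is the one that survives, as $-h_i^sW^k_{,s}$, which is exactly what appears in your (correct) final expression.
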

\begin{proof}
	(1) Since
	\[
	\bar{\nabla}_{\dot{\mathbf{X}}}\mathbf{e}_i - \bar{\nabla}_{\mathbf{e}_i}\dot{\mathbf{X}}
	= \mathbf{X}_*\left(\left[\frac{\partial}{\partial t}, \frac{\partial}{\partial \theta^i}\right]\right)= 0,
	\]
	we obtain
	\begin{align*}
		\frac{d}{dt} g_{ij}
		&= \bar{\nabla}_{\frac{\partial}{\partial t}} \eta(\mathbf{e}_i,\mathbf{e}_j) \\
		&= \eta(\bar{\nabla}_{\dot{\mathbf{X}}}\mathbf{e}_i,\mathbf{e}_j)
		+ \eta(\bar{\nabla}_{\dot{\mathbf{X}}}\mathbf{e}_j,\mathbf{e}_i) \\
		&= \eta(\bar{\nabla}_{\mathbf{e}_i}\dot{\mathbf{X}},\mathbf{e}_j)
		+ \eta(\bar{\nabla}_{\mathbf{e}_j}\dot{\mathbf{X}},\mathbf{e}_i) \\
		&= \eta(\bar{\nabla}_{\mathbf{e}_i}(U\mathbf{X}+W^k\mathbf{e}_k),\mathbf{e}_j)
		+ \eta(\bar{\nabla}_{\mathbf{e}_j}(U\mathbf{X}+W^k\mathbf{e}_k),\mathbf{e}_i) \\
		&= 2U g_{ij} + W_{i,j} + W_{j,i}.
	\end{align*}
	(2) For the null normal $\mathbf{L}$ one has
	\[
	\frac{d}{dt}\mathbf{L} = -U\mathbf{L} - (W^k h_{ki} + \bar{\nabla}_{\mathbf{e}_i}U) g^{ij}\mathbf{e}_j.
	\]
	Hence,
	\begin{align*}
		\frac{d}{dt} h_{ij}
		&= \eta(\bar{\nabla}_{\dot{\mathbf{X}}}\bar{\nabla}_{\mathbf{e}_i}\mathbf{e}_j,\mathbf{L})
		+ \eta(\bar{\nabla}_{\mathbf{e}_i}\mathbf{e}_j,\bar{\nabla}_{\dot{\mathbf{X}}}\mathbf{L}) \\
		&= \eta(\bar{\nabla}_{\mathbf{e}_i}\bar{\nabla}_{\mathbf{e}_j}\dot{\mathbf{X}},\mathbf{L})
		+ \eta(\bar{R}(\dot{\mathbf{X}},\mathbf{e}_i)\mathbf{e}_j,\mathbf{L}) \\
		&\quad + \eta(\Gamma^k_{ij}\mathbf{e}_k - g_{ij}\mathbf{X} + h_{ij}\mathbf{L},
		-U\mathbf{L} - (W^k h_{kl} + \bar{\nabla}_{\mathbf{e}_l}U) g^{lm}\mathbf{e}_m) \\
		&= U_{,ij} + W^k_{,j} h_{ki} + W^k_{,i} h_{kj} + W^k h_{kj,i}.
	\end{align*}
	(3) For the mixed form we compute
	\begin{align*}
		\frac{d}{dt} h_i^k
		&= \frac{d}{dt}(g^{kj} h_{ij}) \\
		&= h_{ij}\frac{d}{dt} g^{kj} + g^{kj}\frac{d}{dt} h_{ij} \\
		&= -2U h_i^k - h_i^j W_{,j}^k - h_{ij} g^{kl} W_{,l}^j \\
		&\quad + g^{kj} U_{,ij} + g^{kj} W^l_{,j} h_{li} + W^l_{,i} h_l^k + W^l h_{l,i}^k \\
		&= U_{,is} g^{sk} - 2U h_i^k + W^s_{,i} h_s^k + W^s h_{s,i}^k - h_i^s W^k_{,s}.
	\end{align*}
\end{proof}
Using the properties of Newton transformations, we can further derive the evolution equation of  $S_{r+1}$.
\begin{lem}\label{lem-evo-S}
	The evolution of $S_{r+1}$ is given by
	\[
	\frac{d}{dt} S_{r+1} = (T_r)^{ij} U_{,ij} - 2(r+1) S_{r+1} U + {S_{r+1}}_{,s} W^s.
	\]
\end{lem}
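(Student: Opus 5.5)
We combine Proposition \ref{N prop}(iii) with the evolution equation (3) for the mixed second fundamental form. Since $S_{r+1}$ is a symmetric function of the eigenvalues of $\Phi=(h^k_i)$, Proposition \ref{N prop}(iii) gives
\[
\frac{d}{dt}S_{r+1}=\mathrm{Trace}\Big(\frac{d\Phi}{dt}\,T_r\Big)=(T_r)^i_k\,\frac{d}{dt}h^k_i .
\]
We substitute item (3) of the preceding proposition and treat the resulting terms one group at a time.

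The first group is the $U$-terms. The Hessian term gives $(T_r)^i_k U_{,is}g^{sk}=(T_r)^{is}U_{,is}$, after lowering and raising indices with $g$. The zeroth-order term gives $-2U\,(T_r)^i_k h^k_i=-2U\,\mathrm{Trace}(\Phi T_r)$. By Newton's formula, Proposition \ref{N prop}(ii), this equals $-2(r+1)S_{r+1}U$. Together these produce the first two terms of the claimed formula.

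The second group is the tangential terms, and the main point is that the commutator-type terms cancel:
\[
(T_r)^i_k\big(W^s_{,i}h^k_s-h^s_iW^k_{,s}\big)=\mathrm{Trace}(T_r\,\Phi\,\nabla W)-\mathrm{Trace}(\nabla W\,\Phi\,T_r)=0 .
\]
Here $\nabla W$ denotes the matrix $(W^k_{,s})$, and the indices should be tracked carefully. The cancellation follows from cyclicity of the trace together with $T_r\Phi=\Phi T_r$, which is Proposition \ref{N prop}(i). The remaining term is $(T_r)^i_kW^sh^k_{i,s}$, which uses Codazzi symmetry to write $h^k_{s,i}=h^k_{i,s}$. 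This equals $W^s\,\mathrm{Trace}(\Phi_{,s}T_r)$. Applying Proposition \ref{N prop}(iii) again, now with the parameter being arc-length along the direction $W$ (equivalently, the chain rule for the symmetric function $S_{r+1}(\Phi)$ under covariant differentiation), this becomes $(S_{r+1})_{,s}W^s$.

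The main obstacle is getting the index bookkeeping right in this cancellation. In particular, one must confirm that $T_r$ is symmetric and commutes with $\Phi$ as endomorphisms, so that the two $W$-derivative terms really are cyclic permutations of the same trace. One must also check that the Codazzi identity $h_{ij,k}=h_{ik,j}$ stated in Section \ref{sec-pre} applies, so that the term $W^sh^k_{s,i}$ becomes a directional derivative of $\Phi$. Once these two checks are made, summing the three contributions gives the stated evolution equation for $S_{r+1}$.
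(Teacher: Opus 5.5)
Your proof is correct and follows the paper's route: substitute evolution equation (3) into Proposition \ref{N prop}(iii), cancel the $\nabla W$ terms using $T_r\Phi=\Phi T_r$ and cyclicity of the trace, and apply Newton's formula (ii) to the zeroth-order term. The one difference is the transport term $(T_r)^i_kW^sh^k_{s,i}$. The paper cites the divergence-free property (iv), presumably combined with $T_r\Phi=S_{r+1}I-T_{r+1}$, while you use Codazzi plus the chain rule for $S_{r+1}$. Both arguments are valid, and the Christoffel correction in your chain-rule step vanishes by the same commutation $T_r\Phi=\Phi T_r$.
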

\begin{proof}
	By Proposition~\ref{N prop}(iii), we have
	\[
	\frac{d}{dt} S_{r+1} 
	= (T_r)^i_k \big(U_{,is} g^{sk} - 2U h_i^k + W^s_{,i} h_s^k + W^s h_{s,i}^k - h_i^s W^k_{,s}\big).
	\]
	From Proposition~\ref{N prop}(i), the terms involving derivatives of $W$ cancel
	\[
	(T_r)^i_k \big(W^s_{,i} h_s^k - h_i^s W^k_{,s}\big) = 0.
	\]
	Moreover, applying Proposition~\ref{N prop}(ii) and (iv) yields
	\[
	(T_r)^i_k W^s h_{s,i}^k = {S_{r+1}}_{,s} W^s, 
	\qquad (T_r)^i_k h_i^k = (r+1) S_{r+1}.
	\]
	Substituting these identities back into the expression for $\frac{d}{dt} S_{r+1}$ immediately gives the claimed formula.
\end{proof}
We now proceed to prove Theorem~\ref{main thm A}
\begin{proof}[Proof of Theorem~\ref{main thm A}]
	The application of Lemma \ref{lem-evo-S} in conjunction with integration by parts leads to the results outlined in Theorem~\ref{main thm A}.
\end{proof}
When $n = 2k$ is an even integer, the integral $\int_M S_kdV$
can be reduced to a functional depending solely on $r$. To this end, we first show that $S_k$ can be decomposed into two terms, one of which is precisely the divergence of a certain tangential vector field. In the case $n=2$, since
\begin{align*}
	S_1 = \frac{n a}{r} + \frac{1}{r} r^i_{,i} = -\frac{1}{r^2} + \left(\log r\right)^i_{,i},
\end{align*}
the claim immediately follows. For $n=4$, noting that
\[
S = h^j_i h^i_j = \frac{n a^2}{r^2} + \frac{2a}{r} r^i_{,i} + \frac{r^i_j r^j_i}{r^2},
\]
we compute
\begin{align*}
	2 S_2 &= S_1^2 - S \\
	&= \frac{n(n-1)}{4 r^4} + \frac{n(n-1)}{2 r^4} |Dr|^2 + \frac{n(n-1)}{4 r^4} |Dr|^4 - \frac{n-1}{r^3} r^i_{,i} \\
	&\quad - \frac{n-1}{r^3} |Dr|^2 r^i_{,i} + \frac{(r^i_{,i})^2 - r^i_{,j} r^j_{,i}}{r^2}.
\end{align*}
Moreover, the following identities hold
\begin{align*}
	\frac{1}{2} g^{ml} \left( \frac{g^{ij} r_i r_j}{r^2} \right)_{,ml} &= \frac{3}{r^4} |Dr|^4 - \frac{4}{r^3} g^{ij} g^{ml} r_{,im} r_j r_l\\
	&\qquad- \frac{r^i_{,i}}{r^3} |Dr|^2 + \frac{1}{r^2} r^i_{,li} r^l + \frac{1}{r^2} r^i_{,j} r^j_{,i}, \\
	-2 g^{lm} \left( \frac{1}{r^2} g^{ij} r_{,il} r_j \right)_{,m} &= \frac{4}{r^3} g^{lm} g^{ij} r_{,il} r_j r_m - \frac{2}{r^2} r^i_{,li} r^l - \frac{2}{r^2} r^i_{,j} r^j_{,i}, \\
	\left( \frac{r^i_{,i} r^l}{r^2} \right)_{,l} &= \frac{(r^i_{,i})^2}{r^2} - \frac{2}{r^3} r^i_{,i} |Dr|^2 + \frac{r^l}{r^2} r^i_{,il}, \\
	\frac{r^l}{r^2} \left( r^i_{,il} - r^i_{,li} \right) &= -\frac{n-1}{r^4} |Dr|^2 - \frac{n-1}{r^4} |Dr|^4\\
	&\qquad+ \frac{n-2}{r^3} r_{,ml} r^m r^l + \frac{1}{r^3} r^i_{,i} |Dr|^2, \\
	\left( \frac{r^m r^l r_l}{r^3} \right)_{,m} &= -\frac{3}{r^4} |Dr|^4 + \frac{r^m_{,m}}{r^3} |Dr|^2 + \frac{2 r^m r^l r_{,lm}}{r^3}.
\end{align*}
Combining these computations, we arrive at
\begin{align*}
	S_2 = \frac{3}{2 r^4} + \Delta \left( \frac{1 + |Dr|^2}{4 r^2} \right) + \left( \frac{r^i_{,i} r^m - 2 r^m_{,i} r^i - |Dr|^2 (\log r)_{,}^m}{2 r^2} \right)_{,m}.
\end{align*}
Continuing similar calculations and using Corollary \ref{1 var} for simplification along the way, we obtain the following general result.
\begin{prop}
	Let $\mathbf{X}:M^n\rightarrow LC^*$  represent a hypersurface and  $n=2k$ is  an even integer. Then, the following identity holds:
	\begin{align*}
		S_k=\frac{(-2)^{-k}}{kr^n}\frac{n!}{k!}+\mathrm{div}(\mathbf{V}),
	\end{align*}
	where, for different values of $k$, 
	$\mathbf{V}$ denotes specific tangent vector fields defined on the hypersurface $M$.
\end{prop}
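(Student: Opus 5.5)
The plan is to avoid the brute-force expansion used for $n=2,4$ and instead exploit the scale invariance in Corollary \ref{1 var}. When $n=2k$, the first-variation density of $\int_M S_k\,dV$ under normal deformations is a pure divergence. So I will connect the given hypersurface to the unit sphere $\{r\equiv 1\}$ by a one-parameter family of hypersurfaces in $LC^*$ and integrate the pointwise variation formula along that family.

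\textbf{Step 1: the deformation.} Write $r=e^{u}$ with $u=\log r$ a function on $\S^n$, and set $r_s=e^{su}$ for $s\in[0,1]$. Let $\mathbf{X}_s$ be the corresponding graph over $\S^n$ in $LC^*$. Since $\mathbf{X}_s=r_s\,(1,\cos\theta^1,\dots)$ with the angular coordinates held fixed, we get
\[
\partial_s\mathbf{X}_s=u\,\mathbf{X}_s .
\]
This is flow \eqref{flow} with $U=u$ and $W\equiv 0$. In particular $\mathbf{X}_0$ is the unit sphere and $\mathbf{X}_1$ is the given hypersurface.

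\textbf{Step 2: pointwise variation.} Lemma \ref{lem-evo-S} with $r+1=k$ and $W=0$ gives
\[
\partial_s S_k=(T_{k-1})^{ij}U_{,ij}-2kS_kU .
\]
By part (1) of the evolution proposition, $\partial_s\, dV=nU\,dV$. Since $n=2k$, the zeroth-order terms cancel. Using Proposition \ref{N prop}(iv), we obtain the pointwise identity
\[
\partial_s\bigl(S_k\,dV_{g_s}\bigr)=\bigl((T_{k-1})^{ij}U_{,i}\bigr)_{,j}\,dV_{g_s}.
\]
Written in the fixed coordinates $\theta$, this is $\partial_i\bigl(\sqrt{\det g_s}\,(T_{k-1})^{ij}_s u_{,j}\bigr)\,d\theta$, a coordinate divergence of a vector field depending smoothly on $s$.

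\textbf{Step 3: integrate in $s$.} Integrating from $s=0$ to $s=1$ gives
\[
S_k\,dV_g-S_k^{(0)}\,dV_{g_0}=\partial_i\Bigl(\int_0^1\sqrt{\det g_s}\,(T_{k-1})^{ij}_s u_{,j}\,ds\Bigr)d\theta .
\]
Dividing by $\sqrt{\det g}$ turns the right-hand side into $\mathrm{div}_g(\mathbf V)$ for an explicit tangent field $\mathbf V$ on $M$. On the unit sphere we have $|Dr|=0$, so $a=-\tfrac12$ and $h^j_i=-\tfrac12\delta^j_i$ by \eqref{kappa}. Hence $S_k^{(0)}=\binom{n}{k}(-\tfrac12)^k$, and since $dV_{g_0}=r^{-n}dV_g$ this yields
\[
S_k=\binom{n}{k}\frac{(-2)^{-k}}{r^{n}}+\mathrm{div}(\mathbf V).
\]

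\textbf{Main obstacle: the constant.} The bookkeeping in Step 2 is checked termwise against Lemma \ref{lem-evo-S} and part (1) of the evolution proposition, and it is conceptually consistent with the Gauss equation. Indeed, $R_{ij}=(2-n)h_{ij}-g_{ij}h^l_l$ shows that $-h$ is the Schouten tensor of $g=r^2d\Omega^2$. Thus the statement is the familiar fact that $\sigma_k(A_g)\,dV_g$ is conformally invariant modulo divergences when $n=2k$, and the argument above is its light-cone version. The real obstacle is matching the normalizing constant. My computation gives $\binom{n}{k}(-2)^{-k}$, which agrees with the displayed $\frac{(-2)^{-k}}{k}\frac{n!}{k!}$ for $k=1,2$, and hence with the explicit formulas for $n=2,4$. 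For $k\ge3$ the two expressions differ, for instance $20$ versus $40$ when $k=3$. I would therefore recheck $S_k^{(0)}$ directly on the sphere, and if the discrepancy persists, state the constant as $\binom{n}{k}(-2)^{-k}$.
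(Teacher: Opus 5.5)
Your argument is correct, and it takes a genuinely different route from the paper's.

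\textbf{The paper's route.} The paper works directly with $h^j_i=\frac{a}{r}\delta^j_i+\frac{r^j_{,i}}{r}$. It expands $S_1$ and $2S_2=S_1^2-S$ in terms of $r$, $Dr$, $D^2r$. Using a list of hand-computed divergence identities, it isolates the constant term ($-\frac{1}{r^2}$ for $n=2$, $\frac{3}{2r^4}$ for $n=4$). For general $k$ it only asserts that ``similar calculations,'' aided by Corollary~\ref{1 var}, give the result.

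\textbf{Your route.} You turn that hint into a pointwise statement. Along $r_s=r^s$ one has $U=\log r$ and $W=0$. For $n=2k$, Lemma~\ref{lem-evo-S}, $\partial_s\,dV=nU\,dV$, and the divergence-freeness of $T_{k-1}$ (which rests on the Codazzi symmetry) make $\partial_s(S_k\,dV_{g_s})$ an exact divergence. Integrating from the unit sphere then proves the identity for every $k$ at once.

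\textbf{What each buys.} The paper's computation gives $\mathbf V$ as a closed-form expression in $r$, as in its $n=4$ formula, but it never actually reaches $k\ge 3$. Yours reaches all $k$ and still yields an explicit $\mathbf V$. In the $\theta$-coordinates, with $u=\log r$ and derivatives taken in the round metric $\sigma$, one has $h^{(s)}_{ij}=s\,u_{;ij}-s^2u_iu_j+\frac{s^2}{2}|Du|^2\sigma_{ij}-\frac12\sigma_{ij}$, which is quadratic in $s$. The factor $e^{nsu}$ from $\sqrt{\det g_s}$ and the factor $e^{-2ksu}$ from $(T_{k-1})^{ij}_s$ cancel exactly when $n=2k$. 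So the $s$-integrand defining $\mathbf V$ is a polynomial in $s$.

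\textbf{The constant.} You do not need to recheck it: your value is the correct one. The unit sphere $r\equiv 1$ is a closed hypersurface in $LC^*$ with $h^j_i=-\frac12\delta^j_i$, and $\int_M\mathrm{div}(\mathbf V)\,dV=0$ there. Hence any identity of the stated form forces the constant to be $\binom{n}{k}(-\tfrac12)^k$. The displayed $\frac{(-2)^{-k}}{k}\frac{n!}{k!}$ equals $(k-1)!\binom{n}{k}(-2)^{-k}$, so it is right only for $k=1,2$; presumably the $k$ in the denominator should be $k!$. The integral corollary that follows the proposition needs the same correction.
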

\begin{cor}
	Let $\mathbf{X}:M^n\rightarrow LC^*$  be a closed orientable hypersurface and  $n=2k$ is an even integer. Then,
	\begin{align*}
		\int_MS_kdV=\frac{(-2)^{-k}}{k}\frac{n!}{k!}\int_M\frac{1}{r^n}dV.
	\end{align*}
\end{cor}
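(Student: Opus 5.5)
The corollary should follow directly from the preceding proposition by integrating over the closed hypersurface. I will take the pointwise identity
\[
S_k=\frac{(-2)^{-k}}{k\,r^n}\frac{n!}{k!}+\mathrm{div}(\mathbf{V}),
\]
valid for $n=2k$ with $\mathbf{V}$ a smooth tangent vector field on $M$, and integrate both sides against $dV$.

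The key steps are as follows.

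\emph{Global definition of $\mathbf{V}$.} I will first check that $\mathbf{V}$ is globally defined on $M$, not merely in a coordinate chart. In the case $n=4$ displayed above, $\mathbf{V}$ is built entirely from $r$, its covariant derivatives $r_{,i}$, $r_{,ij}$, and the metric $g$. These are tensorial quantities on $M$, because $r$ is a globally defined positive function on $M\subset LC^*$. So $\mathbf{V}$ is a genuine smooth vector field on the closed manifold $M$. I expect the same to hold for general $k$, since the proposition is derived by the same recursive manipulations.

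\emph{Divergence theorem.} Since $M$ is closed and orientable and $g=r^2d\Omega^2$ is a genuine Riemannian metric (positive definite because $r>0$), the divergence theorem gives
\[
\int_M\mathrm{div}(\mathbf{V})\,dV=0.
\]

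\emph{Conclusion.} Integrating the pointwise identity then yields
\[
\int_M S_k\,dV=\frac{(-2)^{-k}}{k}\frac{n!}{k!}\int_M r^{-n}\,dV,
\]
which is the claimed formula.

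As a consistency check, I will compare this with Corollary~\ref{1 var} for $2r=n$. That corollary says $\int_M S_k\,dV$ is invariant under deformations. On the other hand, $\int_M r^{-n}\,dV$ is the area of the unit sphere, because $dV=r^n\,d\Omega$. It is therefore constant as well, so the two statements agree. The check also suggests an alternative route: compute both sides at a single representative, say the round slice $r\equiv1$ where $h^j_i=-\tfrac12\delta^j_i$. There $S_k=\binom{2k}{k}(-\tfrac12)^k$, which matches $\frac{(-2)^{-k}}{k}\frac{n!}{k!}$ only up to the factor $\frac{1}{k}\cdot\frac{n!}{k!}$ versus $\binom{n}{k}$. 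I would reconcile the normalizations at that point.

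The main obstacle is not in this corollary itself. It is the correctness of the constant in the preceding proposition, together with confirming that $\mathbf{V}$ is globally defined. If the constant is in doubt, I would use the invariance argument together with the round-slice evaluation to determine it independently.
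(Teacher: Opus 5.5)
Your main argument is exactly how the paper obtains the corollary: integrate the pointwise identity of the preceding proposition, then discard $\int_M\mathrm{div}(\mathbf V)\,dV$ by the divergence theorem on the closed manifold $(M,g)$. The gap is the step you postponed. The round-slice mismatch you found is not a normalization issue to be reconciled; it shows the stated constant is wrong for $k\ge 3$. There is no freedom in $S_k$, which is the $k$-th elementary symmetric function of the eigenvalues of $h^i_j$. At $r\equiv1$ one has $h^i_j=-\tfrac12\delta^i_j$, either from \eqref{kappa} with $a=-\frac{1}{2r}(1+|Dr|^2)$ or from Proposition~\ref{prop-plane} with $\mathbf v=(-1,0,\dots,0)$. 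So on the round slice $\int_M S_k\,dV=\binom{n}{k}(-2)^{-k}|\S^n|$, while the right-hand side of the statement is $\frac{(-2)^{-k}}{k}\frac{n!}{k!}|\S^n|$. The ratio of the two constants is $\frac{k!}{k}=(k-1)!$; for $n=6$ this is $-\tfrac52|\S^6|$ versus $-5|\S^6|$. Hence the corollary fails for every $k\ge 3$, and so does the constant in the proposition you quote, which the paper verifies only for $n=2,4$ and then extrapolates. An argument that takes that constant on trust cannot be completed there.

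Your fallback is the right proof, and it does not need $\mathbf V$ at all. By Corollary~\ref{1 var} with $2k=n$, $\int_M S_k\,dV$ is unchanged under every deformation of a closed hypersurface. Since $M$ is a radial graph of some $r_0>0$ over $\S^n$, take the radial graphs of $r_t=(1-t)r_0+t$, $t\in[0,1]$. This is a deformation of the form \eqref{flow} with $U=\partial_t\log r_t$ and $W=0$, and it ends at the round slice. Moreover $\int_M r^{-n}\,dV=|\S^n|$ because $dV=r^n\,d\Omega$. Therefore
\[
\int_M S_k\,dV=\binom{n}{k}(-2)^{-k}\int_M\frac{1}{r^n}\,dV \qquad (n=2k).
\]
This agrees with the claimed formula exactly when $(k-1)!=1$, i.e.\ $n=2,4$, which are the two cases the paper actually computes. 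Geometrically, for $n\ge 3$ the Gauss equation gives $h_{ij}=-A_{ij}$, where $A$ is the Schouten tensor of the conformally flat metric $g=r^2d\Omega^2$. The identity is then the conformal invariance of $\int_M\sigma_{n/2}(A)\,dV$. You should state and prove this corrected version rather than the one given.
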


%     \begin{thm}
%       Suppose that $f$ is any smooth function of $n$ variable. Then
%       \begin{align*}
%         \frac{d}{dt}\int_Mf(S_1,\cdots,S_n)dV=\int_MU\left(\sum_{r=1}^n\left(D_rf\right)_{,ij}\left(T_{r-1}\right)^{ij}+nf-2rS_r\right)dV.
%       \end{align*}
%     \end{thm}
%     \begin{thm}
%       Suppose that . Then
%       \begin{align*}
%          \frac{d}{dt}\int_MS_rdV=\int_M U(n-2r)S_rdV.
%       \end{align*}
%       If, in addition, the immersion yields an extremal value for $n\neq 2r$, i.e., $S_r=0$ when $t=t_0$, then
%       \begin{align*}
%         \frac{d^2}{dt^2}=(2r-n)\int_M\left(T_{r-1}\right)^{ij}U_iU_jdV.
%       \end{align*}
%     \end{thm}
%\newpage

%\newpage

\section{a length-preserving curve flow}\label{sec-flow}
In this section, we study a length-preserving curve flow on the $2$-dimensional light-cone $LC^*$.
Consider the flow
\begin{equation}\label{length pre flow}
	\left\{
	\begin{aligned}
		&\frac{\partial\mathbf{X}}{\partial t}=\left(r^{\frac{1}{2}}\kappa+\frac{1}{2}r^{-\frac{3}{2}}\right)\mathbf{X},\\
		&\mathbf{X}(\cdot,0)=\mathbf{X}_0(\cdot),
	\end{aligned}
	\right.
\end{equation}
where $\mathbf{X}: \S^1\times[0,T)$ is a family of smooth, closed, space-like curves, $r$ is the  radial function, and $\kappa$ denotes the curvature of the curve on the light-cone $LC^*$. 
Let $\bar{g}:=\frac{\d s}{\d p}=\sqrt{\langle\frac{\partial\mathbf{X}}{\partial p},\frac{\partial\mathbf{X}}{\partial p}\rangle_L}$ be the induced metric of the evolving curve.  Under the flow \eqref{length pre flow}, $\bar{g}$ evolves according to
\begin{align}\label{g-_t}
	\frac{\partial\bar{g}}{\partial t}=\left(r^{\frac{1}{2}}\kappa+\frac{1}{2}r^{-\frac{3}{2}}\right)\bar{g}. 
\end{align}
The following proposition demostrates that the curve length $\mathcal{L}(t):=\oint\d s$ remains constant along this flow.
\begin{prop}\label{L}
	Under the flow \eqref{length pre flow}, the length $\mathcal{L}(t)$ of the evolving curve $\mathbf{X}(\cdot,t)$ satisfy that
	\begin{align*}
		\frac{\d\mathcal{L}}{\d t}=0.
	\end{align*}
\end{prop}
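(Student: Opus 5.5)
The plan is to differentiate the length under the time derivative and reduce the integrand to an exact derivative in the angular parameter. By \eqref{g-_t},
\[
\frac{\d\mathcal L}{\d t}=\oint \frac{\partial \bar g}{\partial t}\,\d p=\oint\Big(r^{\frac12}\kappa+\tfrac12 r^{-\frac32}\Big)\d s .
\]
So it suffices to show that $\big(r^{\frac12}\kappa+\tfrac12 r^{-\frac32}\big)\d s$ integrates to zero over a closed curve.

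First I parametrize the curve. Since the curve is closed and space-like on the $2$-dimensional cone $LC^*$, I parametrize it at each fixed time by the angle $\theta\in\S^1$, so that $\mathbf X=r(\theta)(1,\cos\theta,\sin\theta)$. Passing from $p$ to $\theta$ is harmless, because $\mathcal L$ does not depend on the parametrization. The induced metric is $g_{11}=r^2$, so $\d s=r\,\d\theta$.

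Next I make the curvature explicit, specializing \eqref{kappa} to $n=1$:
\[
\kappa=h^1_1=\frac{a}{r}+\frac{r^1_{,1}}{r}, \qquad a=-\frac{1}{2r}\big(1+|Dr|^2\big).
\]
Here $|Dr|^2=g^{11}r'^2=r'^2/r^2$. The only Christoffel symbol is $\Gamma^1_{11}=r'/r$, so $r_{,11}=r''-r'^2/r$ and $r^1_{,1}=r_{,11}/r^2$. Combining these, I expect
\[
\kappa=-\frac{1}{2r^2}+\frac{r''}{r^3}-\frac{3}{2}\frac{r'^2}{r^4}.
\]
As a sanity check, for a constant $r$ this gives $\kappa=-1/(2r^2)$, consistent with Proposition~\ref{prop-plane}.

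Then I substitute this into the integrand:
\[
\Big(r^{\frac12}\kappa+\tfrac12 r^{-\frac32}\Big)r\,\d\theta=\Big(r''r^{-\frac32}-\tfrac32 r'^2r^{-\frac52}\Big)\d\theta=\big(r'\,r^{-\frac32}\big)'\,\d\theta .
\]
This is an exact $\theta$-derivative of a periodic function, so its integral over $\S^1$ vanishes, which gives $\frac{\d\mathcal L}{\d t}=0$.

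The main obstacle is the middle step: obtaining the correct explicit formula for $\kappa$ in terms of $r$. I need to check the conventions, namely that $|Dr|^2$ in Proposition~\ref{prop-unique} is measured in $g$, and how the index is raised in $r^j_{,i}$. The whole argument rests on the numerical cancellation that makes the integrand a total derivative. If the conventions turn out differently, I would recompute $\kappa$ directly from the structure equations \eqref{eqn-stru}, using $h_{11}=\eta(\bar\nabla_{\mathbf e_1}\mathbf e_1,\mathbf L)$ with the explicit $\mathbf L$. I would then verify that the extra constant $\tfrac12 r^{-3/2}$ is exactly what cancels the $-1/(2r^2)$ term in $\kappa$.
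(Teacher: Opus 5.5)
Your proof is correct and is essentially the paper's argument. The paper also differentiates $\mathcal L$ via \eqref{g-_t}, inserts \eqref{kappa}, and integrates the exact arclength derivative $r^{-\frac12}r_{ss}-\frac12 r^{-\frac32}r_s^2=(r^{-\frac12}r_s)_s$; this is your $(r'r^{-\frac32})'\,\d\theta$ after $\d s=r\,\d\theta$, and your reading of $|Dr|^2$ as the $g$-norm $r'^2/r^2$ is the right one, since it is what the null condition on $\mathbf L$ forces.
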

\begin{proof}
	Using \eqref{g-_t} and \eqref{kappa}, we compute
	\begin{align*}
		\frac{\d\mathcal{L}}{\d t}&=\oint\frac{1}{\bar{g}}\frac{\partial \bar{g}}{\partial t}\d s\\
		&=\oint\left(r^{-\frac{1}{2}}r_{ss}-\frac{1}{2}r_s^2r^{-\frac{3}{2}}\right)\d s\\
		&=0.
	\end{align*}
\end{proof}
To study the existence and regularity of the flow \eqref{length pre flow}, we reduce it to an equivalent scalar equation. Consider a general curve flow on the light-cone $LC^*$ of the form
\begin{align}\label{general flow}
	\frac{\partial \mathbf{X}}{\partial t}(p,t)=U(p,t)\mathbf{X}(p,t) +W(p,t)\mathbf{T}(p,t),
\end{align}
where $\mathbf{X}: \S^1\times[0,T)$ is a family of smooth, closed, space-like curves, $\mathbf{T}$ is the unit tangent vector, and $U$, $W$ are smooth functions.
Utilizing the polar coordinate system $(r,\theta)$, the evolving curve can be represented as
\begin{align}\label{evo cur}
	\mathbf{X}(\theta,t)=r(\theta,t)\mathbf{P}(\theta),
\end{align}
where $\mathbf{P}(\theta)=(1,\cos\theta,\sin\theta)$.
	 Let $\mathbf{Q}(\theta)=(0,-\sin\theta,\cos\theta)$, and note that $\frac{\d s}{\d\theta}=r$. Then the unit tangent vector of the curve can be written as
\begin{align}\label{tangent vector}
	\mathbf{T}=\frac{1}{ r}\frac{\partial r}{\partial\theta}\mathbf{P}+\mathbf{Q}.
\end{align}
When the curve evolves under flow \eqref{general flow}, we have
\begin{align*}
	U\mathbf{X}+W\mathbf{T}=\frac{\partial r}{\partial t}\mathbf{P}+ r \frac{\partial\theta}{\partial t}\mathbf{Q}.
\end{align*}
Substituting \eqref{tangent vector} into the above and comparing coefficients, we obtain the following evolution equations
\begin{align*}
	\frac{\partial r}{\partial t}=U r+\frac{W}{ r}\frac{\partial r}{\partial\theta}, \qquad	\frac{\partial\theta}{\partial t}=\frac{W}{ r}.
\end{align*}
Since the tangential component affects only the parameterization of the curve, we may take $W=0$,  in which case $\frac{\partial\theta}{\partial t}\equiv0$, implying that the polar angle $\theta$ is independent of the time $t$.
The evolution equation for the radial function then becomes
\begin{align}\label{psi evo}
	\frac{\partial r}{\partial t}=U r.
\end{align}
If $ r= r(\theta,t)>0$ is defined on $[0,2\pi]\times[0,T)$ and satisﬁes the equation \eqref{psi evo}, then the family of curves $\{\mathbf{X}= r\mathbf{P}\vert t\in[0,T)\}$ solves the flow \eqref{general flow}. This allows us to reduce the study of \eqref{general flow} to the Cauchy problem $\eqref{psi evo}$ for the scalar function $ r$ with positive initial value $ r_0(\theta)>0$. 
\begin{lem}\label{equi}
	Suppose the initial curve $\mathbf{X}_0$ is a smooth, closed, space-like curve lying on the light-cone $LC^*$. Then, on some time interval, the flow \eqref{general flow} is equivalent to the evolution equation $\eqref{psi evo}$ with positive initial value $ r_0$.
\end{lem}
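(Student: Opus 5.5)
We prove the two implications separately and use the tangential freedom $W$ to pass from one parametrization to the other.

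\emph{From the flow to the scalar equation.} Take a smooth solution $\mathbf{X}(p,t)$ of \eqref{general flow} on $[0,T_1)$ with initial curve $\mathbf{X}_0$. Every point of $LC^*$ can be written as $r\mathbf{P}(\theta)$ with $r>0$. Because $\mathbf{X}_0$ is space-like and closed, the polar angle $\theta$ is a regular parameter along it. Indeed, writing $\mathbf{X}_0=r\mathbf{P}(\theta(p))$ gives $\partial_p\mathbf{X}_0=r_p\mathbf{P}+r\theta_p\mathbf{Q}$. Since $\langle\mathbf{P},\mathbf{P}\rangle_L=0$, $\langle\mathbf{P},\mathbf{Q}\rangle_L=0$ and $\langle\mathbf{Q},\mathbf{Q}\rangle_L=1$, we get $\langle\partial_p\mathbf{X}_0,\partial_p\mathbf{X}_0\rangle_L=r^2\theta_p^2$. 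The space-like condition therefore forces $\theta_p\neq0$. Hence $p\mapsto\theta(p)$ is a local diffeomorphism of $\S^1$, and by closedness and degree considerations it is a covering. For an embedded curve we take it to be of degree one; otherwise $r$ is defined on a finite cover, and the argument is unchanged. So $\mathbf{X}_0=r_0(\theta)\mathbf{P}(\theta)$ with $r_0>0$ smooth and $2\pi$-periodic.

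We now carry this over to positive times. By continuity, $\mathbf{X}(\cdot,t)$ stays space-like with $\theta_p\neq0$ and $r>0$ on a short interval $[0,T')$. On that interval we reparametrize each $\mathbf{X}(\cdot,t)$ by $\theta$. Concretely, we compose with the diffeomorphism $\phi_t$ solving $\partial_t\phi=-W/r$ in the $\theta$ variable, chosen to cancel the tangential motion. The reparametrized family still satisfies \eqref{general flow}, now with $W=0$. Here we use that $U$ is geometric, i.e. it depends only on the point of the curve and not on the parametrization. This is the case for \eqref{length pre flow}, where $U=r^{1/2}\kappa+\frac12 r^{-3/2}$. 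By the coefficient comparison already carried out before the lemma, the radial function $r(\theta,t)$ then solves \eqref{psi evo} with initial value $r_0>0$.

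\emph{From the scalar equation to the flow.} Let $r(\theta,t)>0$ be a smooth solution of \eqref{psi evo} with $r(\cdot,0)=r_0$. Set $\mathbf{X}=r\mathbf{P}$. Then $\partial_t\mathbf{X}=r_t\mathbf{P}=Ur\mathbf{P}=U\mathbf{X}$, so $\mathbf{X}$ solves \eqref{general flow} with $W=0$. Moreover $\langle\mathbf{X}_\theta,\mathbf{X}_\theta\rangle_L=r^2>0$, so every curve in the family is space-like and closed, and it lies in $LC^*$ because $r>0$.

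\emph{Main obstacle.} The delicate step is the first direction, namely showing that $\theta$ remains a global, monotone parameter for as long as the solution is considered. This is why the lemma is stated only ``on some time interval'': we use $T'$ as long as $\theta_p\neq0$ and $r>0$, both of which hold for short time by compactness and continuity. We also need uniqueness, so that the two formulations describe the same geometric solution. I would obtain it from uniqueness for \eqref{psi evo}. For \eqref{length pre flow}, $U$ written in terms of $r$ makes \eqref{psi evo} a strictly parabolic equation of the form $r_t=r^{-1/2}r_{\theta\theta}+\dots$, since $\kappa=\frac{a}{r}+\frac{r_{,ss}}{r}$. Standard quasilinear parabolic theory then gives short-time existence and uniqueness for it.
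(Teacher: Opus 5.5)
Your proposal is correct and follows essentially the paper's route---polar representation $\mathbf{X}=r\mathbf{P}$, coefficient comparison giving $r_t=Ur+\frac{W}{r}r_\theta$, discarding $W$ as a reparametrization, and the converse by direct substitution---while making explicit what the paper leaves tacit, namely that $\langle\mathbf{X}_p,\mathbf{X}_p\rangle_L=r^2\theta_p^2$ makes $\theta$ a regular parameter and that $U$ must be geometric for the reparametrized family to solve the same flow. One harmless slip: for \eqref{length pre flow} the leading term is $r^{-3/2}r_{\theta\theta}$ (as in \eqref{reduce}), not $r^{-1/2}r_{\theta\theta}$, which does not affect strict parabolicity.
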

According to Lemma \ref{equi}, the flow \eqref{length pre flow} is equivalent to the following initial value problem
\begin{equation}\label{reduce}
	\left\{
	\begin{aligned}
		&\frac{\partial r}{\partial t}= r^{-\frac{3}{2}}\frac{\partial^2 r}{\partial\theta^2}-\frac{3}{2} r^{-\frac{5}{2}}\left(\frac{\partial r}{\partial\theta}\right)^2,\\
		& r(\theta,0)= r_0(\theta),
	\end{aligned}
	\right.
\end{equation}
where $ r_0$ is the radial function corresponding to the initial space-like curve.

\subsection{Estimates and longtime existence}
In order to show that the flow \eqref{length pre flow} exists for all times, we need to establish several a priori estimates. We begin by deriving the $C^0$ estimate.  
%For simplicity, we adopt a compact notation throughout, such as writing $ r_\theta = \frac{\partial r}{\partial\theta}$.
\begin{lem}\label{C0}
	Suppose $ r(\theta,t)$ is a solution of the initial value problem \eqref{reduce}. Then for any $(\theta,t)\in[0,2\pi]\times[0,T]$, the following estimate holds
	\begin{align*}
		\min\limits_{\theta\in[0,2\pi]} r(\theta,0)\le r(\theta,t)\le\max\limits_{\theta\in[0,2\pi]} r(\theta,0).
	\end{align*}
\end{lem}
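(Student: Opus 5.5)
The estimate will follow from the parabolic maximum principle applied to the scalar equation \eqref{reduce}. Since $r$ is $2\pi$-periodic in $\theta$ and smooth on $[0,2\pi]\times[0,T]$, for each fixed $t$ the functions $r_{\max}(t)=\max_\theta r(\theta,t)$ and $r_{\min}(t)=\min_\theta r(\theta,t)$ are Lipschitz. The plan is to show that $r_{\max}$ is nonincreasing and $r_{\min}$ is nondecreasing, which gives the claim immediately. As long as the solution exists we have $r>0$, so the coefficients $r^{-3/2}$ and $r^{-5/2}$ are well defined and positive.

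\textbf{Upper bound.} Fix $t$ and let $\theta_t$ be a point where $r(\cdot,t)$ attains its maximum. There $\partial_\theta r=0$ and $\partial_\theta^2 r\le 0$. Hence \eqref{reduce} gives
\[
\partial_t r(\theta_t,t)=r^{-3/2}\partial_\theta^2 r\le 0,
\]
because the gradient term vanishes. By Hamilton's trick (the derivative of a max of a smooth family), at almost every $t$ we get $\frac{d}{dt}r_{\max}(t)=\partial_t r(\theta_t,t)\le 0$. So $r_{\max}(t)\le r_{\max}(0)$. The same argument at a minimum point, where $\partial_\theta r=0$ and $\partial_\theta^2 r\ge 0$, gives $\frac{d}{dt}r_{\min}\ge 0$.

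\textbf{The only delicate step} is making the non-strict inequality rigorous without invoking Hamilton's lemma. To do this I will use a perturbation argument. Set $\tilde r=r-\varepsilon t$ and suppose $\tilde r$ first exceeds $\max r_0$ at an interior time $t_0>0$, at a point $\theta_0$. At that first touching point we have $\partial_t\tilde r\ge 0$, so $\partial_t r\ge\varepsilon>0$. On the other hand, $\partial_\theta r=0$ and $\partial_\theta^2 r\le 0$ there, so the equation forces $\partial_t r\le 0$. This is a contradiction. Letting $\varepsilon\to0$ yields the upper bound, and the lower bound follows symmetrically using $r+\varepsilon t$.

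Periodicity ensures that no boundary points arise in $\theta$. Positivity of $r$ is preserved precisely by the lower bound just proved, so the argument is self-consistent on $[0,T]$.
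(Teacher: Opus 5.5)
Your proof is correct and follows the same route as the paper, which simply invokes the maximum principle for \eqref{0order}; your $\varepsilon$-perturbation correctly uses the vanishing of the gradient term at a spatial extremum. One phrase to tighten: instead of the ``first touching point'', take the maximum of $r-\varepsilon t$ over the compact set $[0,2\pi]\times[0,T]$. If that maximum exceeds $\max r_0$, it is necessarily attained at some $t_0>0$, which justifies $\partial_t(r-\varepsilon t)\ge 0$ there.
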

\begin{proof}
	This follows from the maximum principle applied to the evolution of the radial function 
	\begin{align}\label{0order}
		\frac{\partial r}{\partial t}= r^{-\frac{3}{2}} r_{\theta\theta}-\frac{3}{2} r^{-\frac{5}{2}} r_\theta^2.
	\end{align}
\end{proof}
Next, we establish the gradient estimate, for which the following lemma is needed.
\begin{lem}\label{rho evo}
	Along \eqref{reduce}, the quantity 
	\begin{align*}
		\rho:=\frac{1}{2} r_\theta^2
	\end{align*}
	satisfies the evolution equation
	\begin{align*}
		\frac{\partial\rho}{\partial t}= r^{-\frac{3}{2}}\rho_{\theta\theta}- r^{-\frac{3}{2}} r_{\theta\theta}^2-\frac{9}{2} r^{-\frac{5}{2}} r_\theta\rho_\theta+15 r^{-\frac{7}{2}}\rho^2.
	\end{align*}
\end{lem}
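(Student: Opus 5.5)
The identity is a direct pointwise computation: differentiate the scalar equation \eqref{reduce} once in $\theta$, multiply by $r_\theta$, and rewrite every term in terms of $\rho=\frac12 r_\theta^2$ and its $\theta$-derivatives. No earlier geometric results are needed beyond the reduced equation \eqref{0order}. Since $\theta$ is independent of $t$ after choosing $W=0$ (Lemma~\ref{equi}), the partial derivatives $\partial_t$ and $\partial_\theta$ commute, so $\rho_t=r_\theta r_{\theta t}$.

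\emph{Step 1.} Differentiate \eqref{0order} in $\theta$, remembering that both coefficients depend on $r$:
\begin{align*}
r_{\theta t}=r^{-\frac32}r_{\theta\theta\theta}-\tfrac32 r^{-\frac52}r_\theta r_{\theta\theta}-3r^{-\frac52}r_\theta r_{\theta\theta}+\tfrac{15}{4}r^{-\frac72}r_\theta^3 .
\end{align*}
The term $-\tfrac32 r^{-5/2}r_\theta r_{\theta\theta}$ comes from differentiating $r^{-3/2}$. The term $-3r^{-5/2}r_\theta r_{\theta\theta}$ comes from differentiating $r_\theta^2$. The term $\tfrac{15}{4}r^{-7/2}r_\theta^3$ comes from differentiating $r^{-5/2}$. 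The two middle terms combine, giving
\begin{align*}
r_{\theta t}=r^{-\frac32}r_{\theta\theta\theta}-\tfrac92 r^{-\frac52}r_\theta r_{\theta\theta}+\tfrac{15}{4}r^{-\frac72}r_\theta^3 .
\end{align*}

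\emph{Step 2.} Multiply by $r_\theta$ and use three identities that follow from $\rho_\theta=r_\theta r_{\theta\theta}$ and $\rho_{\theta\theta}=r_{\theta\theta}^2+r_\theta r_{\theta\theta\theta}$:
\begin{itemize}
\item $r_\theta r_{\theta\theta\theta}=\rho_{\theta\theta}-r_{\theta\theta}^2$;
\item $r_\theta^2 r_{\theta\theta}=r_\theta\rho_\theta$;
\item $r_\theta^4=4\rho^2$.
\end{itemize}
Substituting term by term, the first term becomes $r^{-3/2}\rho_{\theta\theta}-r^{-3/2}r_{\theta\theta}^2$. The second becomes $-\tfrac92 r^{-5/2}r_\theta\rho_\theta$. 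The third becomes $\tfrac{15}{4}\cdot 4\,r^{-7/2}\rho^2=15r^{-7/2}\rho^2$. This is exactly the claimed evolution equation.

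The only place where care is needed is Step 1: both the coefficient $r^{-3/2}$ and the factor $r^{-5/2}$ must be differentiated, and the two $r_\theta r_{\theta\theta}$ contributions must be combined with the correct coefficients, $\tfrac32+3=\tfrac92$. The constant $15$ must also be checked, since it arises from $\tfrac52\cdot\tfrac32\cdot r_\theta^3\cdot 4$ after converting $r_\theta^4$ into $\rho^2$. Everything else is bookkeeping.
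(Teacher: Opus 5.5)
Your proof is correct and is the same computation the paper gives. You differentiate \eqref{0order} in $\theta$ to get \eqref{1order}, multiply by $r_\theta$, and substitute $r_\theta r_{\theta\theta\theta}=\rho_{\theta\theta}-r_{\theta\theta}^2$, $r_\theta^2 r_{\theta\theta}=r_\theta\rho_\theta$ and $r_\theta^4=4\rho^2$; your coefficients $\tfrac92$ and $15$ check out.
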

\begin{proof}
	Differentiating $\eqref{0order}$ yields
	\begin{align}\label{1order}
		\frac{\partial r_\theta}{\partial t}= r^{-\frac{3}{2}} r_{\theta\theta\theta}-\frac{9}{2} r^{-\frac{5}{2}} r_\theta r_{\theta\theta}+\frac{15}{4} r^{-\frac{7}{2}} r_\theta^3.
	\end{align}
	From this, we compute
	\begin{align*}
		\frac{\partial\rho}{\partial t}&= r^{-\frac{3}{2}} r_\theta r_{\theta\theta\theta}-\frac{9}{2} r^{-\frac{5}{2}} r_\theta^2 r_{\theta\theta}+\frac{15}{4} r^{-\frac{7}{2}} r_\theta^4\\
		&= r^{-\frac{3}{2}}\rho_{\theta\theta}- r^{-\frac{3}{2}} r_{\theta\theta}^2-\frac{9}{2} r^{-\frac{5}{2}} r_\theta\rho_\theta+15 r^{-\frac{7}{2}}\rho^2.
	\end{align*}
\end{proof}
We now derive the uniform gradient estimate.
\begin{lem}\label{C1}
	Suppose $ r(\theta,t)$ is a solution of the initial value problem \eqref{reduce}.  Then for any  $(\theta,t)\in[0,2\pi]\times[0,T]$, there exists a constant $C$, depending only on the initial data $ r_0$, such that
	\begin{align*}
		\lvert r_\theta(\theta,t)\rvert\le C.
	\end{align*}
\end{lem}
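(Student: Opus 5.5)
The plan is to avoid working with $\rho=\tfrac12 r_\theta^2$ directly. The reaction term $15r^{-7/2}\rho^2$ in Lemma \ref{rho evo} has the wrong sign for a maximum principle argument. Instead I would change the unknown so that the quadratic gradient term in \eqref{0order} disappears, and then apply the maximum principle to the derivative of the new unknown.

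\textbf{Step 1: choice of substitution.} Set $u=r^{\alpha}$. A direct substitution into \eqref{0order} gives
\[
u_t=r^{-\frac32}\Big(u_{\theta\theta}+\big(\tfrac1\alpha-1-\tfrac{3}{2\alpha}\big)\frac{u_\theta^2}{u}\Big).
\]
The coefficient of $u_\theta^2/u$ vanishes exactly when $\alpha=-\tfrac12$. So I take $u:=r^{-1/2}$, which satisfies the quasilinear equation
\[
u_t=r^{-\frac32}u_{\theta\theta}=u^3u_{\theta\theta}.
\]
I would verify this identity carefully at the start; it is the key algebraic step.

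\textbf{Step 2: derivative equation and maximum principle.} Differentiating in $\theta$, the function $w:=u_\theta$ satisfies
\[
w_t=u^3w_{\theta\theta}+3u^2u_\theta\,w_\theta .
\]
This is a linear parabolic equation for $w$ with no zeroth-order term. By Lemma \ref{C0} the coefficient $u^3$ is bounded above and below by positive constants, so the equation is uniformly parabolic. The maximum principle on $\S^1\times[0,T]$ (the functions are periodic in $\theta$) then gives
\[
\max_\theta|w(\cdot,t)|\le\max_\theta|w(\cdot,0)|=\tfrac12\max_\theta\big|r_0^{-3/2}r_{0,\theta}\big|.
\]

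\textbf{Step 3: return to $r$.} Since $r_\theta=-2u^{-3}u_\theta=-2r^{3/2}u_\theta$, Lemma \ref{C0} gives
\[
|r_\theta|\le 2\big(\max r_0\big)^{3/2}\cdot\tfrac12\max\big|r_0^{-3/2}r_{0,\theta}\big|\le\Big(\frac{\max r_0}{\min r_0}\Big)^{3/2}\max|r_{0,\theta}|=:C,
\]
which depends only on $r_0$.

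The only real obstacle is the choice of substitution in Step 1. Once the equation takes the form $u_t=u^3u_{\theta\theta}$, the rest is a routine application of the maximum principle. Regularity needed to differentiate the equation is available because the solution is smooth on $[0,T]$.
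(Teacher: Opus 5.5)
Your proof is correct, and it reaches the bound by a different mechanism than the paper's. The paper applies the maximum principle to the auxiliary function $\mathcal{Q}=\log\rho-4\log r$, i.e.\ to the weighted gradient $r_\theta^2/r^4$. Its evolution has the strictly negative reaction term $-r^{-7/2}\rho$, which rules out an interior maximum.

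Your quantity $u_\theta^2=\tfrac14 r_\theta^2/r^3$ belongs to the same family, with weight $r^{-3}$. Writing $\mathcal{Q}_\beta=\log\rho-\beta\log r$, one checks that at a spatial critical point
\[
\partial_t\mathcal{Q}_\beta-r^{-\frac32}\partial_\theta^2\mathcal{Q}_\beta=\tfrac12\,r^{-\frac32}\Big(\frac{r_\theta}{r}\Big)^2(\beta-3)(\beta-5).
\]
So every $\beta\in(3,5)$ gives a strict maximum principle, and the paper uses $\beta=4$. You sit at the endpoint $\beta=3$, where this term vanishes. There, the substitution $u=r^{-1/2}$ makes the equation for $u_\theta$ exactly linear with no zeroth-order term.

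Your route has several advantages:
\begin{itemize}
\item It is shorter and easier to verify.
\item It works with a function that is smooth everywhere. The paper's $\log\rho$ is undefined where $r_\theta=0$, and its argument has to step around that set.
\item It gives the slightly better constant $(\max r_0/\min r_0)^{3/2}\max|r_{0,\theta}|$ instead of exponent $2$.
\item It exposes the divergence structure $r_t=(r^{-3/2}r_\theta)_\theta=-2u_{\theta\theta}$, which immediately explains why $\oint r\,\d\theta=\mathcal{L}$ is conserved.
\end{itemize}

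The paper's auxiliary-function route gives strict decrease of $\max\mathcal{Q}$ whenever $r_\theta\not\equiv 0$. It also does not rely on an exact gradient-removing change of variables, so it is the more robust template for related equations.
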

\begin{proof}
	At points where $\rho>0$, define the auxiliary function
	\begin{align*}
		\mathcal{Q}:=\log\rho-4\log r.
	\end{align*}
	Then a straightforward computation gives
	\begin{align}\label{Q1}
		\mathcal{Q}_\theta=\rho^{-1}\rho_\theta-4 r^{-1} r_\theta,
	\end{align}
	and
	\begin{align}\label{Q2}
		\mathcal{Q}_{\theta\theta}=\rho^{-1}\rho_{\theta\theta}-\rho^{-2}\rho_\theta^2+8 r^{-2}\rho-4 r^{-1} r_{\theta\theta}.
	\end{align}
	Using \eqref{0order} and Lemma \ref{rho evo}, we find the evolution of $\mathcal{Q}$ 
	\begin{align*}
		\frac{\partial\mathcal{Q}}{\partial t}= r^{-\frac{3}{2}}\rho^{-1}\rho_{\theta\theta}- r^{-\frac{3}{2}}\rho^{-1} r_{\theta\theta}^2-9 r^{-\frac{5}{2}} r_{\theta}^{-1}\rho_\theta+27 r^{-\frac{7}{2}}\rho-4 r^{-\frac{5}{2}} r_{\theta\theta}.
	\end{align*}
	Substituting \eqref{Q1} and \eqref{Q2}, we arrive at
	\begin{align*}
		\frac{\partial\mathcal{Q}}{\partial t}= r^{-\frac{3}{2}}\mathcal{Q}_{\theta\theta}+\frac{1}{2} r^{-\frac{3}{2}}\mathcal{Q}_\theta^2-\frac{1}{2} r^{-\frac{5}{2}} r_{\theta}\mathcal{Q}_\theta- r^{-\frac{7}{2}}\rho.
	\end{align*}
	Assume that the maximum of $\mathcal{Q}$ over $[0, 2\pi] \times [0, T]$ is attained for the first time at $(\theta_0, t_0)$ with $t_0 > 0$. Then at this point
	\begin{align*}
		0\le- r^{-\frac{7}{2}}\rho<0,
	\end{align*}
	a contradiction. Thus, for all $t\in[0,T]$ and $\rho > 0$, the following inequality holds
	\begin{align*}
		\mathcal{Q}(\theta,t)\le\max\limits_{\theta\in[0,2\pi]}\mathcal{Q}(\theta,0).
	\end{align*}
	This completes the proof.
\end{proof}
Since $\eqref{reduce}$ is a divergent quasilinear parabolic equation, classical theory (e.g., \cite{Lie96}) ensures higher regularity estimates follow from the uniform gradient estimate in Lemma \ref{C1}. Therefore, the solution $ r(\cdot,t)$ exists for all $t \in [0, +\infty)$. By Lemma \ref{equi}, we obtain
\begin{prop}\label{long}
	The flow \eqref{length pre flow}, starting from any smooth, closed, space-like curve on the light-cone $LC^*$,
	exists for all times and admits uniform $C^k$ estimates for all $k \in \mathbb{N}$.
\end{prop}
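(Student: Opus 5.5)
Proposition \ref{long} follows by turning the a priori bounds of Lemmas \ref{C0} and \ref{C1} into uniform parabolicity and then invoking quasilinear theory plus a continuation argument, carried out on the scalar problem \eqref{reduce}.

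First, short-time existence. Equation \eqref{reduce} can be written as
\begin{align*}
\frac{\partial r}{\partial t}=a(r)\,r_{\theta\theta}+b(r)\,r_\theta^2,\qquad a(r)=r^{-\frac32},\quad b(r)=-\tfrac32 r^{-\frac52}.
\end{align*}
Since $r_0>0$ is smooth on $\S^1$, this equation is uniformly parabolic near $r_0$. Standard theory for quasilinear parabolic equations on a compact manifold gives a unique smooth solution on some maximal interval $[0,T^*)$. By Lemma \ref{equi}, this solution corresponds to a solution of \eqref{length pre flow}.

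Second, uniform ellipticity and the gradient bound. Lemma \ref{C0} gives $0<\min r_0\le r\le \max r_0$, so the coefficient $r^{-3/2}$ is pinched between two positive constants that depend only on $r_0$. Lemma \ref{C1} then bounds $|r_\theta|$ by a constant $C(r_0)$. I would note one subtlety in that proof: $\mathcal Q$ is only defined where $\rho>0$. This is handled by observing that the maximum of $\mathcal Q$ is automatically attained where $\rho>0$. Combining $\mathcal Q\le\max\mathcal Q(\cdot,0)$ with the $C^0$ bound gives $\rho\le C r^4\le C'$.

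Third, higher regularity. The key observation is that \eqref{reduce} has divergence form:
\begin{align*}
\frac{\partial r}{\partial t}=\left(r^{-\frac32}r_\theta\right)_\theta .
\end{align*}
Now the coefficients are bounded and uniformly parabolic, and $r_\theta$ is bounded. So I would apply the De Giorgi--Nash--Moser / Ladyzhenskaya--Ural'tseva type Hölder estimates for the gradient (e.g.\ \cite{Lie96}), or equivalently differentiate and apply them to the equation \eqref{1order} for $w=r_\theta$, which is again divergence form:
\begin{align*}
w_t=\left(r^{-\frac32}w_\theta-\tfrac32 r^{-\frac52}w^2\right)_\theta .
\end{align*}
This yields $C^{1+\alpha,(1+\alpha)/2}$ bounds on $r$. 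The coefficients $r^{-3/2}$ and $r^{-5/2}r_\theta$ are then Hölder continuous. Schauder estimates give $C^{2+\alpha}$ bounds, and bootstrapping by repeatedly differentiating the equation gives uniform $C^k$ bounds for every $k$. All these constants are independent of $T^*$, since they depend only on the bounds for $r$ and $r_\theta$.

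Finally, the continuation argument. Suppose $T^*<\infty$. The uniform $C^k$ bounds let $r(\cdot,t)$ converge smoothly as $t\to T^*$ to a positive smooth limit. Restarting the short-time existence from this limit extends the solution past $T^*$, which is a contradiction, so $T^*=\infty$. Lemma \ref{equi} transfers long-time existence and the $C^k$ estimates back to the flow \eqref{length pre flow}.

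The main obstacle is the Hölder estimate for $r_\theta$, which is the step where genuine parabolic regularity theory is needed and not just the maximum principle. The divergence structure shown above is what makes it routine, so I would rely on it directly.
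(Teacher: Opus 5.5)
Your proposal is correct and follows the same route as the paper. You combine the $C^0$ and gradient bounds of Lemmas \ref{C0} and \ref{C1} with the divergence structure $r_t=(r^{-3/2}r_\theta)_\theta$ and classical quasilinear parabolic theory \cite{Lie96} to obtain uniform higher-order estimates and hence long-time existence. Your version spells out what the paper leaves implicit: short-time existence, the H\"older gradient estimate via the differentiated equation for $r_\theta$, Schauder bootstrapping, and the continuation argument.
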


\subsection{Convergence}
To study the asymptotic behavior as $t\rightarrow+\infty$, consider the energy functional
\begin{align*}
	\mathcal{E}(t):=\oint r^2(\theta,t)\d\theta.
\end{align*}
\begin{lem}\label{E}
	Suppose $ r(\theta,t)$ is a solution of the initial value problem \eqref{reduce}. Then the energy $\mathcal{E}(t)$ satisfies
	\begin{align*}
		\frac{\d\mathcal{E}}{\d t}=-2\oint r^{-\frac{3}{2}} r_\theta^2\d\theta,
	\end{align*}
	which implies $\mathcal{E}(t)$ is non-increasing along the ﬂow.
\end{lem}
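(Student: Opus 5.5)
Differentiate $\mathcal{E}$ under the integral sign, substitute the scalar equation \eqref{reduce}, and integrate by parts over the closed interval $[0,2\pi]$. By Proposition \ref{long} the solution is smooth and $2\pi$-periodic in $\theta$ for all times, so
\begin{align*}
\frac{\d\mathcal{E}}{\d t}=\oint 2r\,r_t\,\d\theta
=\oint\Big(2r^{-\frac12}r_{\theta\theta}-3r^{-\frac32}r_\theta^2\Big)\d\theta .
\end{align*}

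For the first term we integrate by parts. Since $\big(r^{-\frac12}r_\theta\big)_\theta=r^{-\frac12}r_{\theta\theta}-\frac12 r^{-\frac32}r_\theta^2$ and the left side is the derivative of a periodic function, we get
\begin{align*}
\oint r^{-\frac12}r_{\theta\theta}\,\d\theta=\frac12\oint r^{-\frac32}r_\theta^2\,\d\theta .
\end{align*}
This is the same identity that gave Proposition \ref{L}. Hence
\begin{align*}
\frac{\d\mathcal{E}}{\d t}=\oint r^{-\frac32}r_\theta^2\,\d\theta-3\oint r^{-\frac32}r_\theta^2\,\d\theta=-2\oint r^{-\frac32}r_\theta^2\,\d\theta ,
\end{align*}
which is the claimed formula.

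Monotonicity follows at once: $r>0$ by Lemma \ref{C0}, so the integrand $r^{-\frac32}r_\theta^2$ is nonnegative and $\mathcal{E}$ is non-increasing. The only point needing care is that the boundary terms vanish under integration by parts, and this holds because of periodicity and the smoothness from Proposition \ref{long}. Beyond that, the computation is just algebra with the coefficients.
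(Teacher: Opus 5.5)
Your computation is correct and is essentially the paper's proof. The paper writes $r_t=(r^{-\frac32}r_\theta)_\theta$ and integrates $2r\,(r^{-\frac32}r_\theta)_\theta$ by parts once, while you expand $2r\,r_t$ and handle the second-order term through the total derivative $(r^{-\frac12}r_\theta)_\theta$; this is the same periodic integration by parts with slightly different bookkeeping.
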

\begin{proof}
	Differentiating $\mathcal{E}(t)$ and integrating by parts
	\begin{align*}
		\frac{\d\mathcal{E}}{\d t}&=2\oint r\left( r^{-\frac{3}{2}} r_{\theta\theta}-\frac{3}{2} r^{-\frac{5}{2}} r_\theta^2\right)\d\theta\\
		&=2\oint r\left( r^{-\frac{3}{2}} r_\theta\right)_\theta\d\theta\\
		&=-2\oint r^{-\frac{3}{2}} r_\theta^2\d\theta.
	\end{align*}	
\end{proof}

\begin{lem}\label{derivate bound}
	Under the flow \eqref{length pre flow}, the following estimate holds
	\begin{align*}
		\left\lvert\frac{\d}{\d t}\oint r^{-\frac{3}{2}} r_\theta^2\d\theta\right\rvert\le C,
	\end{align*}
	where the constant $C$ depends only on the initial data $ r_0$. 
\end{lem}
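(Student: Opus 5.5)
We differentiate the functional under the integral sign and use the uniform estimates already in hand. Since $\theta$ is independent of $t$ in the reduced problem \eqref{reduce}, we have
\begin{align*}
\frac{\d}{\d t}\oint r^{-\frac{3}{2}} r_\theta^2\,\d\theta
=\oint\Big(-\tfrac{3}{2}r^{-\frac{5}{2}}r_\theta^2\,\partial_t r+2r^{-\frac{3}{2}}r_\theta\,\partial_t r_\theta\Big)\d\theta .
\end{align*}
We then substitute \eqref{0order} for $\partial_t r$ and \eqref{1order} for $\partial_t r_\theta$. This expresses the integrand as a polynomial in $r_\theta, r_{\theta\theta}, r_{\theta\theta\theta}$ with coefficients that are negative powers of $r$.

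Every factor can now be bounded uniformly in time. Lemma \ref{C0} gives $0<\min r_0\le r\le\max r_0$, so every power $r^{-\alpha}$ is bounded above by a constant depending only on $r_0$. Lemma \ref{C1} bounds $|r_\theta|$. Proposition \ref{long} supplies uniform $C^k$ bounds, in particular for $r_{\theta\theta}$ and $r_{\theta\theta\theta}$. Each term of the integrand is therefore bounded by a constant $C(r_0)$, and integrating over $[0,2\pi]$ yields the claim.

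If one prefers to avoid the third derivative, integrate by parts in the term $2r^{-3}r_\theta r_{\theta\theta\theta}$:
\begin{align*}
\oint 2r^{-3}r_\theta r_{\theta\theta\theta}\,\d\theta=-\oint\big(2r^{-3}r_{\theta\theta}^2-6r^{-4}r_\theta^2r_{\theta\theta}\big)\d\theta .
\end{align*}
After this step the integrand involves only $r$, $r_\theta$, and $r_{\theta\theta}$.

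The only genuinely nontrivial input is a time-independent $C^2$ bound. This is exactly what Proposition \ref{long} asserts, via the quasilinear parabolic regularity theory applied to \eqref{reduce} on top of Lemmas \ref{C0} and \ref{C1}. Since we are allowed to assume it, the rest is bookkeeping. The main point to check is that the higher-order estimates are indeed uniform in $t$ rather than merely finite on bounded time intervals. This holds because the equation is uniformly parabolic: its diffusion coefficient $r^{-3/2}$ is pinned between two positive constants by Lemma \ref{C0}.
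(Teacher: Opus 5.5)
Your proposal is correct and is essentially the paper's argument. You differentiate under the integral and substitute \eqref{0order} and \eqref{1order}, which gives the integrand $2r^{-3}r_\theta r_{\theta\theta\theta}-\frac{21}{2}r^{-4}r_\theta^2r_{\theta\theta}+\frac{39}{4}r^{-5}r_\theta^4$, and then bound each term using Lemma \ref{C0} and the uniform $C^k$ estimates of Proposition \ref{long}; your optional integration by parts is correct and only lowers the needed regularity from $C^3$ to $C^2$.
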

\begin{proof}
	Using the evolution equations \eqref{0order} and \eqref{1order}, we compute
	\begin{align*}
		\frac{\d}{\d t}\oint r^{-\frac{3}{2}} r_\theta^2\d\theta&=\oint\left(-\frac{3}{2}r^{-\frac{5}{2}}r_\theta^2r_t+2r^{-\frac{3}{2}}r_\theta r_{\theta t}\right)\d\theta\\
		&=\oint\left(2r^{-3}r_\theta r_{\theta\theta\theta}-\frac{21}{2}r^{-4}r_\theta^2r_{\theta\theta}+\frac{39}{4}r^{-5}r_\theta^4\right)\d\theta.
	\end{align*}
	By Proposition \ref{long}, each term on the right-hand side is uniformly bounded in terms of the initial data, hence there exists a constant $C$ depending only on $r_0$ such that
	\begin{align*}
		\left\lvert\frac{\d}{\d t}\oint r^{-\frac{3}{2}} r_\theta^2\d\theta\right\rvert\le C.
	\end{align*}
\end{proof}

\begin{lem}\label{L2}
	Along the flow \eqref{length pre flow}, it holds that
	\begin{align*}
		\lim\limits_{t\rightarrow+\infty}\oint r_\theta^2\d\theta=0.
	\end{align*}
\end{lem}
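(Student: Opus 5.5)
The plan is the standard Barbalat-type argument, combining Lemma \ref{E} and Lemma \ref{derivate bound}. Set
\[
F(t):=\oint r^{-\frac{3}{2}} r_\theta^2\,\d\theta \ \ge 0 .
\]
By Lemma \ref{E}, $\mathcal{E}'(t)=-2F(t)$. Since $\mathcal{E}(t)\ge 0$, integrating from $0$ to $t$ gives
\[
2\int_0^t F(\tau)\,\d\tau=\mathcal{E}(0)-\mathcal{E}(t)\le \mathcal{E}(0).
\]
So $F\in L^1([0,\infty))$, with $\int_0^\infty F\,\d\tau\le \tfrac12\oint r_0^2\,\d\theta$.

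Next I show that $F(t)\to 0$ as $t\to\infty$, using that $F$ is uniformly Lipschitz: by Lemma \ref{derivate bound}, $|F'(t)|\le C$ with $C$ depending only on $r_0$. Suppose $F$ did not tend to zero. Then there would be $\varepsilon>0$ and times $t_k\to\infty$, which we may take with $t_{k+1}\ge t_k+1$, such that $F(t_k)\ge\varepsilon$. The Lipschitz bound gives $F(t)\ge \varepsilon/2$ on $[t_k,t_k+\varepsilon/(2C)]$. These intervals are disjoint (after passing to a subsequence if $\varepsilon/(2C)\ge 1$), so $\int_0^\infty F\,\d\tau$ would be bounded below by infinitely many contributions of size $\varepsilon^2/(4C)$. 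This contradicts $F\in L^1$, so $F(t)\to 0$.

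Finally, to pass from $F$ to $\oint r_\theta^2\,\d\theta$, I use the $C^0$ bound of Lemma \ref{C0}: $r\le \max r_0=:R$. Hence $r^{-\frac32}\ge R^{-\frac32}$, and
\[
\oint r_\theta^2\,\d\theta\le R^{\frac32}F(t)\to 0 .
\]

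The only point requiring care is the uniformity of the constant in Lemma \ref{derivate bound}. That constant rests on the uniform $C^k$ bounds of Proposition \ref{long} together with the positive lower bound $r\ge\min r_0$ from Lemma \ref{C0}, which control the negative powers of $r$ there. Both are time-independent, so the Lipschitz argument above applies on all of $[0,\infty)$.
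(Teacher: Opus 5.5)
Your proposal is correct and follows essentially the same route as the paper. Lemma~\ref{E} together with $\mathcal{E}\ge 0$ gives integrability of $F(t)=\oint r^{-\frac32}r_\theta^2\,\d\theta$ on $[0,\infty)$; the uniform derivative bound of Lemma~\ref{derivate bound} upgrades this to $F(t)\to 0$ by the same contradiction argument; and the upper bound $r\le\max r_0$ from Lemma~\ref{C0} removes the weight $r^{-\frac32}$. The only difference is cosmetic: you sum contributions over disjoint intervals, whereas the paper uses the vanishing of the tail integral $\int_{t_i}^{\infty}F\,\d t$.
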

\begin{proof}
	From Lemma \ref{E}, we have
	\begin{align}\label{conve}
		\int_{0}^{+\infty}\oint r^{-\frac{3}{2}} r_\theta^2\d\theta\d t<\infty.
	\end{align} 
	Combined with the uniform bound from Lemma \ref{derivate bound}, it follows that
	\begin{align}\label{int conv}
		\lim\limits_{t\rightarrow+\infty}\oint r^{-\frac{3}{2}} r_\theta^2\d\theta=0.
	\end{align}
	Suppose, for the sake of contradiction, that the above limit does not hold. Then there exists a constant $C_0 > 0$ and a sequence  $\{t_i\}_{i=1}^\infty$ tending to infinity, such that $\oint r^{-\frac{3}{2}} r_\theta^2\d\theta\ge C_0$ for all $i$. Since $\frac{\d}{\d t}\oint r^{-\frac{3}{2}} r_\theta^2\d\theta$ is uniformly bounded, we can find a constant $\epsilon_0$ such that for all
	$t\in(t_i,t_i+\epsilon_0)$, 
	\begin{align*}
		\oint r^{-\frac{3}{2}} r_\theta^2\d\theta\ge \frac{C_0}{2}.
	\end{align*}
	 Integrating over these intervals gives
	\begin{align}\label{cont}
		\int_{t_i}^{t_i+\epsilon_0}\oint r^{-\frac{3}{2}} r_\theta^2\d\theta\d t\ge\frac{C_0\epsilon_0}{2}.
	\end{align}
	However, \eqref{conve} implies $\lim\limits_{i\rightarrow\infty}\int_{t_i}^{+\infty}\oint r^{-\frac{3}{2}} r_\theta^2\d\theta\d t=0$, which contradicts \eqref{cont}. Therefore, the limit \eqref{int conv} must hold. Finally, since $r$ is uniformly bounded along the flow by Lemma \ref{C0}, we conclude that
		\begin{align*}
		\lim\limits_{t\rightarrow+\infty}\oint r_\theta^2\d\theta=0.
	\end{align*}
\end{proof}

We are now ready to complete the proof of Theorem \ref{thm behavior}.
\begin{proof}[Proof of Theorem \ref{thm behavior}]
	By Proposition \ref{long} and the Arzel\`a–Ascoli theorem, any sequence $\{t_k\}$ has a subsequence $\{t_{k_i}\}$ such that $ r_\theta(\theta, t_{k_i})$ converges smoothly to a function $f(\theta)$ as $t_{k_i} \to +\infty$. In view of Lemma \ref{L2}, the limit function must be identically zero, i.e., $f(\theta) \equiv 0$. Since every subsequence converges to zero, it follows that $ r_\theta$ converges smoothly to zero as $t \to +\infty$. Applying the same argument again and invoking Proposition \ref{L}, we conclude that $ r$ converges smoothly to a constant value $\omega$, where $\omega=\frac{1}{2\pi}\oint r_0(\theta)\d\theta$.
\end{proof}

\noindent {\bf Acknowledgements}
This work is supported by the National Natural Science Foundation of China (No. 12571062).\\
\\
\noindent {\bf Data availability} No data availability statement is required, as no experimental data is involved.

%\noindent{\bf Declarations}
\noindent{\bf Conflict of interest} On behalf of all authors, the corresponding author states that there is no conflict of
interest.

\end{document}